\newtheorem{theorem}{Theorem}
\newtheorem{lemma}[theorem]{Lemma}
\newtheorem{proposition} [theorem]{Proposition}
\newtheorem{corollary}[theorem]{Corollary}
\def\S{\Sigma}
\def\Z{\mathbb{Z}}
\def\xyma{\xymatrix@M.7em}
\begin{document}
\title{Homotopy theory and generalized dimension subgroups}
\author{Sergei O. Ivanov}
\address{Chebyshev Laboratory, St. Petersburg State University, 14th Line, 29b,
Saint Petersburg, 199178 Russia} \email{ivanov.s.o.1986@gmail.com}

\author{Roman Mikhailov}
\address{Chebyshev Laboratory, St. Petersburg State University, 14th Line, 29b,
Saint Petersburg, 199178 Russia and St. Petersburg Department of
Steklov Mathematical Institute} \email{rmikhailov@mail.ru}
\author{Jie Wu }
\address{Department of Mathematics, National University of Singapore, 10 Lower Kent Ridge Road, Singapore 119076} \email{matwuj@nus.edu.sg}
\urladdr{www.math.nus.edu.sg/\~{}matwujie}

\thanks{The main result (Theorem~\ref{main}) is supported by Russian Scientific Foundation, grant N 14-21-00035. The last author is also partially supported by the Singapore Ministry of Education research grant (AcRF Tier 1 WBS No. R-146-000-190-112)
and a grant (No. 11329101) of NSFC of China.}

\begin{abstract}
Let $G$ be a group and $R,S,T$ its normal subgroups. There is a
natural extension of the concept of commutator subgroup for the
case of three subgroups $\|R,S,T\|$ as well as the natural
extension of the symmetric product $\|\bf r,\bf s,\bf t\|$ for
corresponding ideals $\bf r,\bf s, \bf t$
 in the integral group ring $\mathbb Z[G]$. In this
paper, it is shown that the generalized dimension subgroup $G\cap
(1+\|\bf r,\bf s,\bf t\|)$ has exponent 2 modulo $\|R,S,T\|.$ The
proof essentially uses homotopy theory. The considered generalized
dimension quotient of exponent 2 is identified with a subgroup of
the kernel of the Hurewicz homomorphism for the loop space over a
homotopy colimit of classifying spaces.
\end{abstract}

 \maketitle

\section{Introduction}

Let $G$ be a group and $\mathbb Z[G]$ its integral group ring.
Every two-sided ideal $\mathfrak a$ in the integral group ring
$\mathbb Z[G] $ of a group $G$ determines a normal subgroup $$D(G,
{\bf a}):=G \cap (1 + {\bf a})$$ of $G$. Such subgroups are called
{\it generalized dimension subgroups}. The identification of
generalized dimension subgroups is a fundamental problem in the
theory of group rings. In general, given an ideal ${\bf a}$, the
identification of $D(G,{\bf a})$ is very difficult, for a survey
on the problems in this area see \cite{Gupta},
\cite{MikhailovPassi}.

The idea that {\it the generalized dimension subgroups are related
to the kernels of Hurewicz homomorphisms of certain spaces} was
discussed in \cite{MikhailovPassi}, \cite{MPW}, however, in the
cited sources, all application of homotopical methods to the
problems of group rings were related to very special cases. In
this paper, we apply homotopy theory for a purely
group-theoretical result of a more general type, namely to the
description of the exponent of generalized dimension quotient
constructed for a triple of normal subgroups in any group $G$.

Let $G$ be a group and $R,S$ its normal subgroups. Denote ${\bf
r}=(R-1)\mathbb Z[G],\ {\bf s}=(S-1)\mathbb Z[G]$. It is proved in
\cite{BD} that \begin{equation}\label{bd} D(G,{\bf r}{\bf s}+{\bf
s}{\bf r})=[R,S].
\end{equation}
The following question arises naturally: how one can generalize
the result (\ref{bd}) to the case of three and more normal
subgroups of $G$. Our main result is the following.

\begin{theorem}\label{main}
Let $G$ be a group and $R,S,T$ its normal subgroups. Denote $${\bf
r}=(R-1)\mathbb Z[G],\ {\bf s}=(S-1)\mathbb Z[G],\ {\bf
t}=(T-1)\mathbb Z[G]$$ and
\begin{align*}
& \|R,S,T\|:=[R,S\cap T][S,R\cap T][T,R\cap S]\\ & \|\bf r, \bf
s,\bf t\|:=\bf r(\bf s\cap \bf t)+(\bf s\cap \bf t)\bf r+\bf s(\bf
r\cap \bf t)+(\bf r\cap \bf t)\bf s+\bf t(\bf r\cap \bf s)+(\bf
r\cap \bf s)\bf t.
\end{align*}
Then, for every $g\in D(G, \|{\bf r},{\bf s},{\bf t}\|)$, $g^2\in
\|R,S,T\|$, i.e. the generalized dimension quotient
\begin{equation}\label{dimension}
\frac{D(G, \|{\bf r},{\bf s},{\bf t}\|)}{\|R,S,T\|}
\end{equation}
is a $\mathbb Z/2$-vector space.
\end{theorem}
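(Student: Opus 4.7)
The plan is to translate Theorem~\ref{main} into homotopy theory, as signalled in the abstract: build a space $X$ as a homotopy colimit of classifying spaces attached to $G$, $R$, $S$, $T$ and their pairwise intersections, identify the quotient $D(G,\|\mathbf{r},\mathbf{s},\mathbf{t}\|)/\|R,S,T\|$ with a subgroup of $\ker\bigl(h\colon\pi_*(\Omega X)\to H_*(\Omega X)\bigr)$, and then prove that this Hurewicz kernel is annihilated by $2$.

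I would first reduce to a universal case. Both $\|R,S,T\|$ and $D(G,\|\mathbf{r},\mathbf{s},\mathbf{t}\|)$ are compatible with group surjections, so it suffices to prove the theorem for a single universal triple $(\widetilde G;\widetilde R,\widetilde S,\widetilde T)$ that surjects onto $(G;R,S,T)$; a natural choice is one built from a free product, where the commutator and ideal structure is maximally transparent. Next I would construct $X$ as a homotopy colimit over the poset $2^{\{1,2,3\}}\setminus\emptyset$, labelling the vertex $I$ with $B(G/R_I)$ for a suitable combination $R_I$ of $R,S,T$, with the obvious projections as structure maps. A Seifert--van Kampen computation on this colimit should give $\pi_1(X)\cong G/\|R,S,T\|$: the three outer faces force commutation of $R$ with $S\cap T$ (and cyclically), while the higher cells glue only along the common intersections $B(G/(R\cap S))$ etc., so impose no further relations.

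The remaining task is the comparison with homology, which I expect to be the main obstacle. Passing to $\Omega X$ and using a cobar or Adams--Hilton model, $H_*(\Omega X)$ carries a filtration comparable with the augmentation-ideal filtration of $\mathbb{Z}[\pi_1 X]$, and one would verify that $\|\mathbf{r},\mathbf{s},\mathbf{t}\|$ corresponds precisely to the part of the augmentation ideal killed by the comparison map. This identifies $D(G,\|\mathbf{r},\mathbf{s},\mathbf{t}\|)/\|R,S,T\|$ with a subgroup of $\ker h$ in the relevant degree; once that kernel is shown to have exponent $2$, the $\mathbb{Z}/2$-vector space structure follows automatically (a group of exponent $2$ is abelian). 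The genuine mathematical content is this exponent-$2$ bound, and I expect it to exploit the $\Sigma_2$-symmetry visible already in the definitions of $\|R,S,T\|$ and $\|\mathbf{r},\mathbf{s},\mathbf{t}\|$ (each summand appears with both orderings of its two factors), translating topologically into a Whitehead product or a James--Hopf style obstruction which naturally carries a factor of~$2$.
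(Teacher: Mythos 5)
Your high-level strategy matches the paper's: realize the dimension quotient inside the kernel of a Hurewicz homomorphism for the loop space of a homotopy colimit of classifying spaces, and show that kernel has exponent $2$. But there is a concrete error that derails the execution. The Seifert--van Kampen computation does not give $\pi_1(X)\cong G/\|R,S,T\|$; for the colimit of the $B(G/\prod_{i\in I}R_i)$ it gives $\pi_1(X)\cong G/RST$. The group $\frac{R\cap S\cap T}{\|R,S,T\|}$ appears as $\pi_3(X)$, and this identification is a genuinely deep input (Ellis--Mikhailov, resting on the Brown--Loday higher van Kampen theorem for $n$-cubes of spaces and the universality of the associated crossed cube of groups); ordinary van Kampen cannot produce it. Because of this misplacement you are working in the wrong degree: the relevant Hurewicz map is $h_2\Omega\colon\pi_2(\Omega X)=\pi_3(X)\to H_2(\Omega X)$, not something in degree $1$, where the Hurewicz map is injective and no torsion phenomenon occurs.

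Two further steps that you defer are in fact the bulk of the work. First, to map $\frac{\mathbf{r}\cap\mathbf{s}\cap\mathbf{t}}{\|\mathbf{r},\mathbf{s},\mathbf{t}\|}$ to $H_2(\Omega X)$ compatibly, the paper does not use a cobar/Adams--Hilton filtration argument; it applies $\mathbb Z[-]$ to a simplicial-group model of the pushout and develops a theory of cubes of fibrations of simplicial non-unital rings and crossed cubes of rings, using the crossed-cube relations $\mu_ih(a\otimes b)=h(\mu_ia\otimes b)=h(a\otimes\mu_ib)$ to show $\|\mathbf{r},\mathbf{s},\mathbf{t}\|\subseteq\mathrm{Im}(\mu_i)$. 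Second, the exponent-$2$ bound on $\ker\bigl(\pi_2(\Omega X)\to H_2(\Omega X)\bigr)$ is a theorem valid for every connected $X$; your instinct that the factor of $2$ is quadratic in origin is right, but the actual proof compares the fibration sequences for $[GY,GY]\to GY\to (GY)_{ab}$ and $\Delta^2(GY)\to\mathbb Z[GY]\to\mathbb Z[GY]/\Delta^2(GY)$ for the Kan loop group, identifies the relevant terms with $\Gamma(H_2(Y))$ and $H_2(Y)\otimes H_2(Y)$, and uses that $\Phi=\ker(\Gamma\to\otimes^2)$ is a $2$-torsion functor. None of this is recoverable from a Whitehead-product heuristic, so as it stands the proposal has genuine gaps at each of the three stages.
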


The proof of theorem \ref{main} consists of the following steps.
First we show that there exists a space $X$ such that there is a
commutative diagram
\begin{equation}\label{jq}
\xyma{\frac{R\cap S\cap T}{\|R,S,T\|}\ar@{->}[r]\ar@{=}[d] &
\frac{\bf
r\cap \bf s\cap \bf r}{\|\bf r,\bf s,\bf t\|}\ar@{->}[d]\\
\pi_2(\Omega X)\ar@{->}[r]^{h_2\Omega}& H_2(\Omega X)}
\end{equation}
where the lower horizontal map is the Hurewicz homomorphism.
Secondly, we show that, for any space $X$, the kernel of the
Hurewicz homomorphism
$$
\Omega h_2: \pi_2(\Omega X)\to H_2(\Omega X)
$$
is a 2-torsion subgroup of $\pi_2(\Omega X)=\pi_3(X)$.

There are examples of groups with triples of subgroups such that
the generalized dimension quotient \ref{dimension} is non-trivial.
Let $F=F(a,b,c)$ be a free group with basis $\{a,b,c\}$. Consider
the following normal subgroups of $F$:
$$
R=\langle a^2,c\rangle^F,\ S=\langle a,bc^{-1}\rangle^F,\
T=\langle a,b\rangle^F.
$$
Then, there exists the following natural commutative diagram
$$
\xyma{\mathbb Z/2\ar@{>->}[r] \ar@{=}[d] & \frac{R\cap S\cap
T}{\|R,S,T\|}\ar@{->}[r]\ar@{=}[d] & \frac{\bf r\cap \bf s\cap \bf
r}{\|\bf r,\bf s,\bf t\|}\ar@{=}[d]\\ \mathbb Z/2\ar@{>->}[r] &
\pi_2(\Omega \Sigma \mathbb RP^2)\ar@{->}[r]^{h_2\Omega}&
H_2(\Omega \Sigma \mathbb RP^2)}
$$
This example and discussion of a generalization of the considered
construction to the case of $>3$ normal subgroups is given in
section \ref{lastsection}.

Another application of homotopic methods is the following
identification of the generalized dimension subgroup (see theorem
\ref{modg}):
\begin{equation}\label{bd11}
D(G,{\bf rs+sr}+({\bf r}\cap {\bf s}){\bf t}+{\bf t}({\bf r}\cap
{\bf s}))=[R,S][R\cap S,T].
\end{equation}
This generalizes (\ref{bd}), indeed, (\ref{bd}) is equivalent to
(\ref{bd11}) for $T=1$.

The space $X$ from (\ref{jq}) is the homotopy colimit of the cubic
diagram of eight classifying spaces $BG, B(G/R), B(G/S), B(G/T),
B(G/RS), B(G/RT), B(G/ST)$. The left vertical isomorphism in
(\ref{jq}) is proved in \cite{EM}. In section \ref{section3} we
develop the theory of cubes of fibrations in the category of
simplicial non-unital rings and correspondence between $n$-cubes
of fibrations with crossed $n$-cubes of rings. We obtain
ring-theoretical analogs of the result from \cite{ES}. Note that,
in this paper, we do not consider the properties of universality
of crossed $n$-cubes of rings. The universality property is needed
for an explicit description of the homology groups $H_*(\Omega X)$
of homotopy colimits $X$ of classifying spaces (see the proof of
theorem 1 in \cite{EM}). For the reason of this paper, namely, for
an analysis of generalized dimension subgroups, only crossed
properties of the diagrams of rings are enough and these
properties are given in section \ref{section3}.

\section{Hurewicz homomorphism}

\subsection{Two lemmas about squares of abelian groups}
First we  state two lemmas about squares of abelian groups. These
lemmas are advanced versions of well known statements (see
\cite[Part 1, 6.2.6]{Faith}). We give them without a prove because
it is standard. Authors learned these lemmas from non-formal
discussions with Alexander Generalov.

Consider a square of abelian groups  $\mathcal S$ with induced
homomorphisms on kernels and cokernels:
$$
\xymatrix@-5mm{&& {\rm Ker}(g)\ar[dd]\ar[rr]^{\tilde f} && {\rm Ker}(g')\ar[dd]&& \\ &&&&&& \\
{\rm Ker}(f)\ar[rr]\ar[dd]^{\tilde g} && A \ar[rr]^f \ar[dd]^{g} && B\ar[dd]^{g'}\ar[rr] && {\rm Coker}(f)\ar[dd]^{\tilde g'}\\
&& &\mathcal S & &&\\
{\rm Ker}(f')\ar[rr]&& C\ar[dd]\ar[rr]^{f'} && D\ar[dd]\ar[rr]
&& {\rm Coker}(f')\\ &&&&&& \\
&& {\rm Coker}(g)\ar[rr]^{\tilde f'} && {\rm Coker}(g'), && }
$$
and maps $A\to B\oplus C \to D$ given by $a\mapsto (f(a),-g(a))$
and $(b,c)\mapsto g'(b)+f'(c).$
\begin{lemma}\label{Lemma_pushout}
The following statements are equivalent.
\begin{enumerate}
\item $\mathcal S$ is a pushout square. \item The sequence $A\to
B\oplus C \to D \to 0$ is exact. \item  $\tilde g'$ is an
isomorphism and $\tilde g$ is an epimorphism. \item $\tilde f'$ is
an isomorphism and $\tilde f$ is an epimorphism.
\end{enumerate}
\end{lemma}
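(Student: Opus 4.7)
The plan is to prove the equivalence along the cycle $(1) \Leftrightarrow (2) \Leftrightarrow (3)$, and then deduce $(2) \Leftrightarrow (4)$ by symmetry. The equivalence $(1) \Leftrightarrow (2)$ is essentially the construction of pushouts in the category of abelian groups: the pushout of $B \xleftarrow{f} A \xrightarrow{g} C$ is the cokernel of the map $A \to B \oplus C$, $a \mapsto (f(a), -g(a))$, equipped with the canonical maps from $B$ and $C$. So $\mathcal{S}$ is a pushout precisely when $D$ realizes this cokernel, which is exactly condition $(2)$.

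For $(2) \Rightarrow (3)$ I would verify, in turn, surjectivity of $\tilde{g}'$, injectivity of $\tilde{g}'$, and surjectivity of $\tilde{g}$. For surjectivity of $\tilde{g}'$: given $d \in D$, write $d = g'(b) + f'(c)$ using the exactness at $D$; then the class of $b$ in ${\rm Coker}(f)$ maps to the class of $d$ in ${\rm Coker}(f')$. For injectivity of $\tilde{g}'$: if $g'(b) = f'(c)$, then $(b, -c)$ is killed by $B \oplus C \to D$, hence by exactness at $B\oplus C$ it equals $(f(a), -g(a))$ for some $a$, which forces $b \in {\rm image}(f)$. For surjectivity of $\tilde{g}$: given $c' \in {\rm Ker}(f')$, the pair $(0, -c')$ lies in the kernel of $B \oplus C \to D$, hence equals some $(f(a), -g(a))$; so $a \in {\rm Ker}(f)$ and $\tilde{g}(a) = c'$.

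For $(3) \Rightarrow (2)$ I would first show surjectivity of $B \oplus C \to D$: given $d \in D$, surjectivity of $\tilde{g}'$ produces $b \in B$ whose image in ${\rm Coker}(f')$ is the class of $d$, so $d - g'(b) = f'(c)$ for some $c$, and $(b, -c)$ maps to $d$. Next, for exactness at $B \oplus C$: the composite is zero because $g'f = f'g$; conversely, if $g'(b) + f'(c) = 0$, then the class of $b$ in ${\rm Coker}(f')$ vanishes, so by injectivity of $\tilde{g}'$ the class of $b$ in ${\rm Coker}(f)$ also vanishes, giving $a$ with $f(a) = b$. Then $f'(g(a) + c) = g'f(a) + f'(c) = 0$, so surjectivity of $\tilde{g}$ supplies $a' \in {\rm Ker}(f)$ with $g(a') = g(a) + c$, and $a - a'$ is the desired preimage of $(b, c)$ in $A$.

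Finally, $(2) \Leftrightarrow (4)$ is obtained by reflecting $\mathcal{S}$ along its diagonal, which interchanges the roles of $(f, f', \tilde{f}, \tilde{f}')$ and $(g, g', \tilde{g}, \tilde{g}')$ while leaving the condition in $(2)$ invariant. The whole argument is a routine diagram chase in the category of abelian groups; the only thing to track is the sign convention in the formulas $a \mapsto (f(a), -g(a))$ and $(b, c) \mapsto g'(b) + f'(c)$. I do not expect any serious obstacle — this is the kind of folklore lemma the authors say they omit precisely because it is standard.
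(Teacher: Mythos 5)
Your proof is correct. Note that the paper itself gives \emph{no} proof of this lemma: the authors state both square lemmas ``without a prove because it is standard,'' citing Faith's textbook, so there is no argument in the paper to compare against. Your route --- identifying the pushout of $B \leftarrow A \rightarrow C$ with the cokernel of $a \mapsto (f(a),-g(a))$ for $(1)\Leftrightarrow(2)$, a direct diagram chase for $(2)\Leftrightarrow(3)$, and the diagonal reflection for $(4)$ --- is the standard one and every step checks out; in particular the chase in $(3)\Rightarrow(2)$ (lift $b$ via injectivity of $\tilde g'$, then correct by an element of $\mathrm{Ker}(f)$ via surjectivity of $\tilde g$) is exactly right, and you correctly do not demand exactness at $A$. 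The only blemish is a slip of phrasing in $(3)\Rightarrow(2)$: ``the class of $b$ in $\mathrm{Coker}(f')$'' should read ``the class of $g'(b)$ in $\mathrm{Coker}(f')$,'' i.e.\ $\tilde g'([b])=0$; the surrounding argument makes clear this is what you meant.
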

\begin{lemma}\label{Lemma_pullback}
The following statements are equivalent.
\begin{enumerate}
\item $\mathcal S$ is a pullback square. \item The sequence $0\to
A\to B\oplus C \to D$ is exact. \item  $\tilde g$ is an
isomorphism and $\tilde g'$ is a  monomorphism. \item $\tilde f$
is an isomorphism and $\tilde f'$ is a monomorphism.
\end{enumerate}
\end{lemma}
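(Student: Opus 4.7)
I would reduce everything to the concrete construction of pullbacks in the category of abelian groups. The pullback $P$ of $g':B\to D$ and $f':C\to D$ is the subgroup $\{(b,c)\in B\oplus C:g'(b)=f'(c)\}$, which is identified via $(b,c)\mapsto(b,-c)$ with $\ker(\beta)$, where $\beta(b,c)=g'(b)+f'(c)$. Commutativity of $\mathcal S$ supplies a canonical map $\gamma:A\to P$, $\gamma(a)=(f(a),g(a))$, and $\mathcal S$ is a pullback precisely when $\gamma$ is an isomorphism. Under the identification above this is exactly the exactness of $0\to A\to B\oplus C\to D$, so (1)$\Leftrightarrow$(2) is essentially built into the construction.

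For (1)$\Rightarrow$(3), the isomorphism $\gamma$ forces $\tilde g:\ker(f)\to\ker(f')$ to be bijective and $\tilde g':\mathrm{Coker}(f)\to\mathrm{Coker}(f')$ to be injective. Injectivity of $\tilde g$ is the equality $\ker(f)\cap\ker(g)=0$, which is exactly the injectivity of $\gamma$. For surjectivity, any $c\in\ker(f')$ gives $(0,c)\in P$, whose unique preimage under $\gamma$ lies in $\ker(f)$ and is sent by $g$ to $c$. For the cokernel, if $g'(b)=f'(c)$ then $(b,c)\in P$ lifts to some $a\in A$ with $f(a)=b$, so $b$ dies in $\mathrm{Coker}(f)$. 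The implication (1)$\Rightarrow$(4) is the horizontal mirror of the same argument.

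The converses (3)$\Rightarrow$(1) and (4)$\Rightarrow$(1) are a short diagram chase. Assuming (3), injectivity of $\gamma$ is immediate from $\tilde g$ being injective. For surjectivity, given $(b,c)\in P$, the hypothesis that $\tilde g'$ is a monomorphism produces $a_0\in A$ with $f(a_0)=b$; then $g(a_0)-c\in\ker(f')$, and surjectivity of $\tilde g$ yields $a_1\in\ker(f)$ with $g(a_1)=g(a_0)-c$, whence $\gamma(a_0-a_1)=(b,c)$. The argument is entirely routine and there is no genuine obstacle; the only care needed is to keep the sign conventions in $\alpha$ and $\beta$ consistent with the concrete pullback $P$, which is presumably why the authors cite the statement as standard.
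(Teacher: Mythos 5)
Your proof is correct and complete; note that the paper itself offers no proof of this lemma, explicitly declaring it standard, so there is nothing to compare against. Your reduction of (1)$\Leftrightarrow$(2) to the concrete pullback $P=\{(b,c): g'(b)=f'(c)\}$ identified with $\ker\bigl((b,c)\mapsto g'(b)+f'(c)\bigr)$ handles the sign convention correctly, and the chases for (1)$\Rightarrow$(3) and (3)$\Rightarrow$(1) (with (4) by the transposed argument) are all sound.
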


\subsection{Whitehead quadratic functor}
For an abelian group $A$, the Whitehead group $\Gamma(A)$ is
generated by symbols $\gamma(a),\ a\in A$ with the following
relations
\begin{align*}
& \gamma(0)=0,\\
& \gamma(-a)=\gamma(a),\ a\in A\\
&
\gamma(a+b+c)-\gamma(a+b)-\gamma(a+c)-\gamma(b+c)+\gamma(a)+\gamma(b)+\gamma(c)=0,\
a,b,c\in A.
\end{align*}
The correspondence $A\mapsto \Gamma(A)$ defines a quadratic
functor in the category of abelian groups called the {\it
Whitehead quadratic functor}. It has the following simple
properties
\begin{align*} & \Gamma(\mathbb Z/n)=\mathbb Z/(2n,n^2)\\
& A\otimes B={\rm Ker}\{\Gamma(A\oplus B)\to \Gamma(A)\oplus
\Gamma(B)\}.
\end{align*}
There is a natural transformation of functors $\Gamma\to
\otimes^2$ defined, for an abelian group $A$, as $\gamma(a)\mapsto
a\otimes a,\ a\in A$.

Define the functor $\Phi$ as a kernel of $\Gamma\to \otimes^2$.
Then, for any abelian group $A$, there is a natural exact sequence
$$
0\to \Phi(A)\to \Gamma(A)\to A\otimes A\to \Lambda^2(A)\to 0
$$
where $\Lambda^2$ is the exterior square. One can easily check
that, for any pair of abelian groups $A,B$, the (bi)natural map
between the cross-effects of the functors $\Gamma$ and $\otimes^2$
$$
\Gamma(A|B)=A\otimes B \to \otimes^2(A|B)=A\otimes B\oplus
B\otimes A
$$
is a monomorphism. From this property together with the above
description of the values of $\Gamma$ for cyclic groups follows
that $\Phi$ is a 2-torsion functor, i.e. {\it for any $A$ and
$a\in \Phi(A)$, $2a=0$ in $\Phi(A)$}.

\subsection{Kernel of the Hurewicz homomorphism}
\begin{proposition}\label{h2prop}
For any connected space $X$, the kernel of the Hurewicz
homomorphism
$$
h_2\Omega: \pi_2(\Omega X)\to H_2(\Omega X)
$$
is a 2-torsion subgroup of $\pi_2(\Omega X)=\pi_3(X).$
\end{proposition}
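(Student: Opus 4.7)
The plan is to reduce to the second Postnikov section of $Y = \Omega X$, analyze its principal $K(A,2)$-fibration by the Serre spectral sequence, and exploit the loop-space structure together with the Whitehead functor $\Gamma$ and its $2$-torsion subfunctor $\Phi$ to pin down the kernel of $h_2\Omega$.

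Write $G = \pi_1(\Omega X) = \pi_2(X)$, which is abelian since $\Omega X$ is an H-space, and $A = \pi_2(\Omega X) = \pi_3(X)$. The map from $\Omega X$ to its second Postnikov section $Y$ is a $3$-equivalence, so it preserves $\pi_2$ and $H_2$. The space $Y$ is the total space of a principal fibration $K(A,2) \to Y \to K(G,1)$ classified by a class $k \in H^3(K(G,1);A)$, and since $Y \simeq \Omega X_3$ (the loop of the third Postnikov section of $X$), the class $k$ is the loop of the first $k$-invariant $\tilde{k} \in H^4(K(G,2);A)$ of $X$. By the Eilenberg--MacLane identification $H_4(K(G,2);\mathbb{Z}) \cong \Gamma(G)$, the class $\tilde{k}$ corresponds to a homomorphism $\omega \colon \Gamma(G) \to A$.

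Now I run the Serre spectral sequence of $K(A,2) \to Y \to K(G,1)$. Because $Y$ is an H-space, $G$ acts trivially on $\pi_2(Y)=A$ and hence on the fiber homology; together with $H_1(K(A,2))=0$ the only differential affecting total degree~$2$ is the transgression $d^3 \colon H_3(K(G,1)) \to H_2(K(A,2)) = A$. A standard filtration argument then identifies $\ker(h_2\Omega)$ with $\mathrm{Im}(d^3)$. The adjunction between the cohomology loop map and the homology suspension $\sigma \colon H_3(K(G,1)) \to H_4(K(G,2)) = \Gamma(G)$ gives the factorization $d^3 = \omega \circ \sigma$.

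It remains to show that $\sigma$ factors through $\Phi(G) \subset \Gamma(G)$. Since $G$ is abelian, $K(G,2)$ is an abelian topological group, so $H_*(K(G,2);\mathbb{Z})$ is a graded Hopf algebra, and the image of the homology suspension lies in its primitive elements by a classical Milnor--Moore type theorem. A direct computation with the divided-power structure on $H_*(K(G,2);\mathbb{Z})$ shows that the reduced coproduct $\Gamma(G) = H_4(K(G,2)) \to H_2 \otimes H_2 = G \otimes G$ coincides with the natural transformation $\gamma(a) \mapsto a \otimes a$ introduced above; hence the primitive part of $\Gamma(G)$ in this degree is exactly $\Phi(G)$. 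Consequently $\mathrm{Im}(d^3) \subset \omega(\Phi(G))$, which is $2$-torsion because $\Phi$ is. The main obstacle is the last identification of primitives with $\Phi(G)$, which requires matching the Eilenberg--MacLane isomorphism $H_4(K(G,2)) \cong \Gamma(G)$ with the topologically defined Hopf-algebra coproduct.
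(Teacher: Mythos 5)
Your argument is essentially correct but takes a genuinely different route from the paper's. The paper works simplicially: for simply connected $Y$ it takes the Kan loop group $GY$, compares the fibration $[GY,GY]\rightarrowtail GY\twoheadrightarrow (GY)_{ab}$ with $\Delta^2(GY)\rightarrowtail \mathbb{Z}[GY]\twoheadrightarrow \mathbb{Z}[GY]/\Delta^2(GY)$, identifies $\pi_2([GY,GY])\cong\Gamma(H_2(Y))$ (Whitehead) and $\pi_2(\Delta^2(GY))\cong H_2(Y)\otimes H_2(Y)$ (K\"unneth), and extracts from Lemmas \ref{Lemma_pushout} and \ref{Lemma_pullback} a surjection of $\Phi(H_2(Y))$ onto the kernel. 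You instead truncate $\Omega X$ at the second Postnikov stage, read off $\ker(h_2\Omega)=\mathrm{Im}(d^3)$ from the Serre spectral sequence of $K(A,2)\to Y\to K(G,1)$, factor $d^3=\omega\circ\sigma$ through the first $k$-invariant $\omega\colon\Gamma(G)\to A$, and bound $\mathrm{Im}(\sigma)$ by the primitives of $H_4(K(G,2))$. Both arguments bottom out in the same algebraic fact --- that $\Phi=\ker(\Gamma\to\otimes^2)$ is $2$-torsion --- and the two occurrences of $\Gamma$ agree since $H_2(Y)\cong\pi_2(Y)$ for simply connected $Y$. Your route is more classical and makes the role of the $k$-invariant explicit; the paper's route avoids $k$-invariants and primitives entirely and reuses its derived-functor machinery elsewhere. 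The identification you flag as the main obstacle --- that the reduced diagonal $H_4(K(G,2))=\Gamma(G)\to G\otimes G$ is $\gamma(a)\mapsto a\otimes a$ --- is standard and verifiable (check it on $\mathbb{Z}$ via the divided-power coalgebra of $\mathbb{CP}^\infty$ and on the cross-effect via $K(\mathbb{Z}^2,2)$; since $\Gamma$ and $\otimes^2$ are right exact and commute with filtered colimits, these two computations determine the natural transformation), so I would not count it as a gap.

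The one genuine omission is the reduction to the case where $\Omega X$ is connected. For connected but not simply connected $X$ your setup fails at the first step: $\Omega X$ has $\pi_0(\Omega X)=\pi_1(X)$ components, its $1$-truncation is not $K(G,1)$, and the first $k$-invariant of $X$ lives in cohomology twisted by $\pi_1(X)$, so $\tilde k$ is not a class in $H^4(K(G,2);A)$ and the identification $H^4(K(G,2);A)\cong\mathrm{Hom}(\Gamma(G),A)$ is unavailable. You must first pass to the universal cover $\tilde X\to X$, observe that $\Omega\tilde X\simeq\Omega_0X$ and that $H_2(\Omega_0X)\to H_2(\Omega X)$ is a (split) monomorphism because the homology of $\Omega X$ decomposes over its components, so that the two Hurewicz kernels coincide, and only then run your Postnikov argument on $\tilde X$. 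This is exactly the final step of the paper's proof; as written, your argument establishes the proposition only for simply connected $X$.
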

Observe that, the statement about the third Hurewicz homomorphism
obviously is not true without taking loops. For any odd prime $p$,
the Moore space $P^3(p)$ has $\pi_3(P^3(p))=\mathbb Z/p$ and
$H_3(P^3(p))=0$.

\begin{proof} First consider the case of a simply-connected space $Y$. Let
$GY$ be the simplicial Kan loop construction. The following
diagram of fibrations
$$
\xyma{[GY,GY]\ar@{>->}[r] \ar@{->}[d] & GY \ar@{->>}[r]
\ar@{->}[d] & (GY)_{ab}\ar@{->}[d]\\ \Delta^2(GY)\ar@{>->}[r] &
\mathbb Z[GY]\ar@{->>}[r] & \mathbb Z[GY]/\Delta^2(GY)}
$$
induces the commutative diagram of homotopy groups
\begin{equation}\label{h4d}
\xyma{H_4(Y) \ar@{=}[d] \ar@{->}[r] & \pi_2([GY,GY])\ar@{->}[r]
\ar@{->}[d] & \pi_2(\Omega Y)\ar@{->>}[r] \ar@{->}[d]^{h_2\Omega}& H_3(Y)\ar@{=}[d]\\
H_4(Y) \ar@{->}[r] & \pi_2(\Delta^2(GY))\ar@{->}[r] & H_2(\Omega
Y) \ar@{->>}[r] & H_3(Y) }
\end{equation}
A simple analysis of connectivity of the simplicial groups $[GY,
GY]$ and $\Delta^2(GY)$ shows that there are natural isomorphisms
\begin{align*}
& \pi_2([GY,GY])=\pi_2([GY,GY]/[[GY,GY],GY])=\pi_2(\Lambda^2((GY)_{ab})),\\
&
\pi_2(\Delta^2(GY))=\pi_2(\Delta^2(GY)/\Delta^3(GY))=\pi_2((GY)_{ab}\otimes
(GY)_{ab}).
\end{align*}
The derived functors of $\Lambda^2$ and $\otimes^2$  are
well-known in a general situation (see, for example,
\cite{BauesPira}). We obtain the following natural diagram
$$
\xyma{\pi_2([GY,GY])\ar@{->}[r]\ar@{=}[d] & \pi_2(\Delta^2(GY))\ar@{=}[d]\\
\Gamma(H_2(Y))\ar@{->}[r] & H_2(Y)\otimes H_2(Y)}
$$
The left hand isomorphism in the last diagram is a reformulation
of the result due to Whitehead \cite{W}, the right hand
isomorphism follows from the Kunneth formula. Now lemmas
\ref{Lemma_pushout} and \ref{Lemma_pullback} imply that, the
diagram (\ref{h4d}) can be extended to the following diagram
$$
\xyma{ & \Phi(H_2(Y)) \ar@{>->}[d] \ar@{->>}[r] & K\ar@{>->}[d]\\
H_4(Y) \ar@{=}[d] \ar@{->}[r] & \Gamma(H_2(Y))\ar@{->}[d]
\ar@{->}[r] & \pi_3(Y)\ar@{->>}[r]\ar@{->}[d]^{h_2\Omega} & H_3(Y)\ar@{=}[d] \\
H_4(Y) \ar@{->}[r] & H_2(Y)\otimes H_2(Y)\ar@{->>}[d] \ar@{->}[r]
& H_2(\Omega Y) \ar@{->>}[d] \ar@{->>}[r] & H_3(Y)\\ &
H_2(Y)\wedge H_2(Y)\ar@{=}[r] & H_2(\pi_1(\Omega Y))}
$$
where the upper horizontal map is an epimorphism. Since the group
$\Phi(H_2(Y))$ is 2-torsion, the kernel $K$ of the Hurewicz
homomorphism also is 2-torsion and the needed statement is proved.

Now consider the case of arbitrary connected space $X$. Consider
its universal cover $\tilde X\to X$. The needed statement follows
from the diagram
$$
\xyma{\pi_2(\Omega \tilde X)\ar@{=}[r] \ar@{->}[d]^{h_2\Omega} & \pi_2(\Omega X)\ar@{->}[d]^{h_2\Omega}\\
H_2(\Omega \tilde X)\ar@{>->}[r] & H_2(\Omega X)}
$$
and the above proof of the statement for the simply-connected
case.
\end{proof}

\section{Cubes of simplicial non-unital rings and their crossed
cubes}\label{section3}

\subsection{Cubes of fibrations and fibrant cubes}
Set $\langle n\rangle=\{1,\dots, n\}.$ By a ring we assume a
\underline{\bf non-unital} ring, and by a ring homomorphism we
assume a non-unital ring homomorphism.

Consider the category of simplicial rings (s.r.)  as a model
category, whose weak equivalences are weak equivalences of
underlying simplicial sets and fibrations are level-wise
surjective homomorphisms (see Ch.2 $\S4$ \cite{Quillen}). Then a
fibration sequence in ${\sf sRng}$ is isomorphic to a sequence of
the form
\begin{equation}\label{eq_fibr}
I\to R  \twoheadrightarrow R/I,
\end{equation}
 where $I$ is an ideal of the simplicial ring $R.$

Consider the ordered sets $\{0, 1\}$ and  $\{-1, 0, 1\}$ as
categories in a usual way. Let $\mathcal F:\{-1,0,1\}^n\to {\sf
sRng}$ be a functor.  For two disjoint subsets
$\alpha,\beta\subseteq \langle n \rangle $ (i.e. $\alpha \cap
\beta =\emptyset$) we put
$$\mathcal F(\alpha,\beta)=\mathcal F(i_1,\dots,i_n),$$
where $\alpha=\{k\mid i_k=1\}$ and $\beta=\{k\mid i_k=-1\}.$ Then,
if $\alpha'\supseteq \alpha$ and $\beta' \subseteq \beta,$  we
have a map
$$\mathcal F(\alpha,\beta)\longrightarrow \mathcal F(\alpha',\beta').$$

 An {\bf $n$-cube of fibrations} of s.r. (see \cite[1.3]{Loday}) is a functor $\mathcal F:\{-1,0,1\}^n\to {\sf sRng}$ such that
for any disjoint subsets $\alpha,\beta\subseteq \langle n \rangle$
and $k\in \langle n \rangle\setminus (\alpha\cup \beta)$ we have a
fibration sequence
$$\mathcal F(\alpha,\beta\cup\{k\})\to \mathcal F(\alpha,\beta)\twoheadrightarrow \mathcal F(\alpha\cup\{k\},\beta).$$ If $n=2$ it is a $3\times 3$ square whose rows and columns are fibration sequences:
$$\xymatrix{
\mathcal F(\emptyset,\langle 2\rangle)\ar[r]\ar[d] & \mathcal F(\emptyset,\{2\})\ar[r]\ar[d] & \mathcal F(\{1\},\{2\})\ar[d]\\
\mathcal F(\emptyset,\{1\})\ar[r]\ar[d] & \mathcal F(\emptyset,\emptyset)\ar[r]\ar[d] & \mathcal F(\{1\},\emptyset)\ar[d] \\
\mathcal F(\{2\},\{1\})\ar[r] & \mathcal F(\{2\},\emptyset)\ar[r]
& \mathcal F(\langle 2\rangle,\emptyset). }$$

An {\bf $n$-cube} of s.r. is a functor $\mathcal R:\{0,1\}^n\to
{\sf sRng}.$ We set
$$\mathcal R(\alpha)=\mathcal R(i_1,\dots,i_n),$$
where $\alpha=\{k\mid i_k=1\}.$
 It is easy to see that $\{0,1\}^n$ is a direct category, and hence, it is a Reedy category. It follows that there is a natural model structure on the category of $n$-cubes of simplicial rings, called Reedy model structure \cite{Hovey}, \cite{Hirschhorn}. Weak equivalences of $n$-cubes in this model structure are defined level-wise.  An $n$-cube $\mathcal R$ is {\bf fibrant} in this model structure if and only if the map
$$\mathcal R(\alpha)\longrightarrow {\sf lim}_{\alpha'\supsetneq\: \alpha}\: \mathcal R(\alpha')$$
is a fibration of s.r. for any $\alpha\subseteq \langle n
\rangle$.

Let $\mathcal R$ be an $n$-cube of s.r. We consider the functor
${\sf E}_{\rm fib}(\mathcal R):\{-1,0,1\}^n\to {\sf sRng}$ given
by
\begin{equation}\label{eq_E_fib}
{\sf E}_{\rm fib}(\mathcal R)(\alpha,\beta)={\rm Ker}\left(
\mathcal R(\alpha)\to \prod_{i\in \beta} \mathcal R(\alpha\cup
\{i\}) \right)
\end{equation}
with obvious morphisms.

\begin{lemma}\label{Lemma_fibrant_cubes}Let $\mathcal R$ be an  $n$-cube of s.r. Then the following statements are equivalent.
\begin{enumerate}
\item $\mathcal R$ is fibrant. \item $\mathcal R$ can be embedded
into an $n$-cube of fibrations. \item  ${\sf E}_{\rm fib}(\mathcal
R)$ is an $n$-cube of fibrations.
\end{enumerate}
Moreover, if $\mathcal R$ is a fibrant $n$-cube of s.r., ${\sf
E}_{\rm fib}(\mathcal R)$ is the unique (up to unique isomorphism
that respects the embeddings) $n$-cube of fibrations to which
$\mathcal R$ can be embedded.
\end{lemma}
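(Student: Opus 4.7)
My plan is to prove the equivalences via the cycle $(3)\Rightarrow(2)\Rightarrow(1)\Rightarrow(3)$, and then establish uniqueness separately. The implication $(3)\Rightarrow(2)$ is immediate: ${\sf E}_{\rm fib}(\mathcal R)$ is itself an $n$-cube of fibrations with ${\sf E}_{\rm fib}(\mathcal R)(\alpha,\emptyset)=\mathcal R(\alpha)$, which witnesses the embedding.

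For $(2)\Rightarrow(1)$, suppose $\mathcal R$ is the $\beta=\emptyset$ restriction of some $n$-cube of fibrations $\mathcal F$. To check Reedy fibrancy at $\alpha$, take any $\bar x=(\bar x(\alpha'))_{\alpha'\supsetneq\alpha}\in M_\alpha:={\sf lim}_{\alpha'\supsetneq\alpha}\mathcal R(\alpha')$ in a fixed simplicial degree. Fix an ordering $k_1,\dots,k_m$ of $\langle n\rangle\setminus\alpha$ and construct a lift iteratively: set $x_0=0$, and at stage $j$ put $x_j=x_{j-1}+z_j$, where $z_j\in\mathcal F(\alpha,\{k_1,\dots,k_{j-1}\})\subseteq\mathcal R(\alpha)$ is chosen so that $x_j$ maps to $\bar x(\alpha\cup\{k_j\})$ in $\mathcal R(\alpha\cup\{k_j\})$. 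The required discrepancy between $\bar x(\alpha\cup\{k_j\})$ and the image of $x_{j-1}$ in $\mathcal R(\alpha\cup\{k_j\})$ lies in $\mathcal F(\alpha\cup\{k_j\},\{k_1,\dots,k_{j-1}\})$ by the compatibility of $\bar x$, and is lifted to $z_j$ via the fibration $\mathcal F(\alpha,\{k_1,\dots,k_{j-1}\})\twoheadrightarrow\mathcal F(\alpha\cup\{k_j\},\{k_1,\dots,k_{j-1}\})$ supplied by the cube-of-fibrations structure. The final $x=x_m$ maps to $\bar x(\alpha\cup\{k_j\})$ for every $j$, hence to every $\bar x(\alpha')$ by compatibility; doing this degree-wise makes $\mathcal R(\alpha)\to M_\alpha$ a fibration.

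The main direction is $(1)\Rightarrow(3)$. The kernel identities
$$
{\sf E}_{\rm fib}(\mathcal R)(\alpha,\beta\cup\{k\})=\ker\bigl({\sf E}_{\rm fib}(\mathcal R)(\alpha,\beta)\to{\sf E}_{\rm fib}(\mathcal R)(\alpha\cup\{k\},\beta)\bigr)
$$
are automatic from \eqref{eq_E_fib}, so the content is the level-wise surjectivity of each projection. Given $y\in{\sf E}_{\rm fib}(\mathcal R)(\alpha\cup\{k\},\beta)\subseteq\mathcal R(\alpha\cup\{k\})$, I would build a compatible family $(\bar x(\alpha'))_{\alpha'\supsetneq\alpha}$ in $M_\alpha$ by downward induction on $|\alpha'|$: set $\bar x(\alpha')$ equal to the image of $y$ in $\mathcal R(\alpha')$ when $\alpha'\supseteq\alpha\cup\{k\}$ (automatically $0$ if $\alpha'\cap\beta\neq\emptyset$, since $y$ vanishes in each $\mathcal R(\alpha\cup\{k,i\})$); set $\bar x(\alpha')=0$ when $k\notin\alpha'$ and $\alpha'\cap\beta\neq\emptyset$; and for $\alpha'$ with $k\notin\alpha'$ and $\alpha'\cap\beta=\emptyset$, use Reedy fibrancy at $\alpha'$ to lift the already-defined compatible family $(\bar x(\alpha''))_{\alpha''\supsetneq\alpha'}$. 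A final Reedy lift at $\alpha$ then produces the desired $x\in\mathcal R(\alpha)$, which lies in ${\sf E}_{\rm fib}(\mathcal R)(\alpha,\beta)$ and projects to $y$, thanks to the prescribed values $\bar x(\alpha\cup\{i\})=0$ for $i\in\beta$ and $\bar x(\alpha\cup\{k\})=y$.

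For uniqueness, in any $n$-cube of fibrations $\mathcal F$ extending $\mathcal R$, the fibration sequence $\mathcal F(\alpha,\{k\})\hookrightarrow\mathcal R(\alpha)\twoheadrightarrow\mathcal R(\alpha\cup\{k\})$ forces $\mathcal F(\alpha,\{k\})=\ker(\mathcal R(\alpha)\to\mathcal R(\alpha\cup\{k\}))$; an induction on $|\beta|$ through $\mathcal F(\alpha,\beta\cup\{k\})\to\mathcal F(\alpha,\beta)\twoheadrightarrow\mathcal F(\alpha\cup\{k\},\beta)$ then yields $\mathcal F(\alpha,\beta)={\sf E}_{\rm fib}(\mathcal R)(\alpha,\beta)$, with the isomorphism uniquely determined by the universal property of the kernel. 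The main technical obstacle is the downward-induction bookkeeping in $(1)\Rightarrow(3)$: one must verify at each stage that the partially-defined family on strict supersets is indeed a compatible family in $M_{\alpha'}$, and that the Reedy lifts chosen for the third case respect the prescribed boundary values dictated by the first two cases on further supersets. Once this is carefully unwound, the remainder reduces to routine application of Reedy fibrancy of $\mathcal R$.
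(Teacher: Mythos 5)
Your proof is correct and rests on the same mechanism as the paper's: successively lifting compatible families through the fibration sequences of the cube, and identifying the fibres as iterated kernels to get uniqueness (your uniqueness argument is precisely the paper's proof of $(2)\Rightarrow(3)$). The only difference is organizational --- you close the cycle $(3)\Rightarrow(2)\Rightarrow(1)\Rightarrow(3)$ and re-derive the lifting property ad hoc inside each implication, whereas the paper isolates it once as the statement that fibrancy is equivalent to every $(\alpha,\beta)$-collection admitting a lifting, and then deduces $(1)\Leftrightarrow(3)$ and $(2)\Leftrightarrow(3)$ from that single lemma.
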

\begin{proof}  If $d\geq 0,$ $k\notin \alpha$ and $r\in \mathcal R(\alpha)_d,$ we denote by $r^k$ the image of $r$ in $\mathcal R(\alpha\cup \{k\})_d.$
Assume that $\alpha,\beta\subseteq \langle n \rangle$ are disjoint
sets and $d\geq 0$. An {\bf $(\alpha,\beta)$-collection} is a
collection $(r_i)\in \prod_{i\in \beta} \mathcal R(\alpha\cup
\{i\})$ such that $r_i^j=r_j^i$ for any $i,j\in \beta.$ A {\bf
lifting} of an  $(\alpha,\beta)$-collection $(r_i)$ is an element
$r\in \mathcal R(\alpha)_d$ such that $r_i=r^i.$
  It is easy to see that the ring ${\sf lim}_{\alpha'\supsetneq\: \alpha}\: \mathcal R(\alpha')_d$ consist of  $(\alpha,\alpha^{\rm c})$-collections, where $\alpha^{\sf c}=\langle n \rangle \setminus \alpha.$ Then $\mathcal R$ is fibrant if and only if for any  $(\alpha,\alpha^{\sf c})$-collecion there exists a lifting. We claim that if $\mathcal R$ is fibrant then for any disjoint $\alpha,\beta$ and any  $(\alpha,\beta)$-collection there exist a lifting. The prove is by induction on $|\langle n \rangle\setminus (\alpha\cup \beta)|.$ If it is equal to $0,$ we done. Assume that $(r_i)$ is an $(\alpha,\beta)$-collection. Consider any $j\in  \langle n \rangle\setminus (\alpha\cup \beta)$ and the $(\alpha\cup \{j\},\beta)$-collection $(r^j_i).$
By induction hypothesis we have a lifting $r_j\in \mathcal
R(\alpha\cup \{j\})$ of $(r_i^j).$ Then we get a
$(\alpha,\alpha^{\sf c})$-collection $(r_i)_{i\in \alpha^{\sf
c}},$ whose lifting is the lifting of the original
$(\alpha,\beta)$-collection $(r_i).$ Therefore $\mathcal R$ is
fibrant if and only if any $(\alpha,\beta)$-collection has a
lifting.

(1)$\Rightarrow$(3). Assume that $\mathcal R$ is fibrant and prove
that ${\sf E}_{\rm fib}(\mathcal R)$ is an $n$-cube of fibrations.
Consider the diagram with exact rows
$$\xymatrix{0\ar[r] & {\sf E}_{\rm fib}(\mathcal R)(\alpha,\beta\cup\{k\})\ar[r]\ar[d] & \mathcal R(\alpha) \ar[r]\ar[d] & \prod_{i\in \beta\cup \{k\}}\mathcal R(\alpha\cup\{i\}) \ar[d]\\
 0 \ar[r] & {\sf E}_{\rm fib}(\mathcal R)(\alpha,\beta)\ar[r]\ar[d] & \mathcal R(\alpha) \ar[r]\ar[d] & \prod_{i\in \beta}\mathcal R(\alpha\cup\{i\}) \ar[d] \\
0\ar[r] & {\sf E}_{\rm fib}(\mathcal
R)(\alpha\cup\{k\},\beta)\ar[r] & \mathcal R(\alpha\cup\{k\})
\ar[r] & \prod_{i\in \beta}\mathcal R(\alpha\cup\{i,k\}). }$$ We
have to prove that the left column is a short exact sequence. The
only non-obvious thing is that ${\sf E}_{\rm fib}(\mathcal
R)(\alpha,\beta)_d\to {\sf E}_{\rm fib}(\mathcal
R)(\alpha\cup\{k\},\beta)_d $ is surjective. Consider any $r_k\in
{\sf E}_{\rm fib}(\mathcal R)(\alpha\cup\{k\},\beta)_d$ and denote
$r_i=0$ for $i\in \beta.$ Then $(r_i)$ is a $(\alpha,\beta\cup
\{k\})$-collection, whose lifting is a preimage of $r_k.$

(3)$\Rightarrow$(1). Assume that ${\sf E}_{\rm fib}(\mathcal R)$
is an $n$-cube of fibrations and prove that $\mathcal R $ is
fibrant. We need to prove that for any $(\alpha,\beta)$-collection
there exists a lifting. Prove it by induction on $|\beta|.$ If
$\beta=\emptyset,$ it is obvious. Assume that it holds for $\beta$
and prove it for $\beta'=\beta\cup\{k\},$ where $k\in \langle
n\rangle\setminus (\alpha\cup \beta).$ Consider an
$(\alpha,\beta\cup \{k\})$-collection $(r_i)_{i\in
\beta\cup\{k\}}.$ By induction hypotheses its
$(\alpha,\beta)$-subcollection $(r_i)_{i\in \beta}$ has a lifting
$\bar r\in \mathcal R(\alpha)_d.$  Then $r_k-\bar r^k\in {\sf
E}_{\rm fib}(\alpha\cup \{k\},\beta)_d.$ Since the map ${\sf
E}_{\rm fib}(\mathcal R)(\alpha,\beta)_d\to {\sf E}_{\rm
fib}(\mathcal R)(\alpha\cup\{k\},\beta)_d $ is surjective we get a
preimage $\hat r\in {\sf E}_{\rm fib}(\mathcal R)(\alpha,\beta)_d$
such that $\hat r^k=r_k-\bar r^k.$ Then $r=\hat r+\bar r$ is a
lifting of the $(\alpha,\beta\cup \{k\})$-collection $(r_i)_{i\in
\beta\cup\{k\}}.$

(3)$\Rightarrow$(2). Obvious.

(2)$\Rightarrow$(3). Assume that $\mathcal R$ is embedded into an
$n$-cube of fibrations $\mathcal F.$ Replacing $\mathcal F$ by
isomorphic one, we can assume that the fibres are identical
embeddings. Prove that $\mathcal F(\alpha,\beta)={\sf E}_{\rm
fib}(\alpha,\beta)$ by induction on $|\beta|.$ If
$\beta=\emptyset,$ then   $\mathcal F(\alpha,\emptyset)=\mathcal
R(\alpha)={\sf E}_{\rm fib}(\alpha,\emptyset).$ Assume that it
holds for $\beta$ and prove it for $\beta'=\alpha \cup \{k\}.$ By
induction hypothesis we have a commutative diagram
$$\xymatrix{&0\ar[d]&& \\ & \mathcal F(\alpha,\beta\cup\{k\})\ar[r]\ar[d] & \mathcal R(\alpha) \ar[r]\ar[d] & \prod_{i\in \beta\cup \{k\}}\mathcal R(\alpha\cup\{i\}) \ar[d]\\
 0 \ar[r] & {\sf E}_{\rm fib}(\mathcal R)(\alpha,\beta)\ar[r]\ar[d] & \mathcal R(\alpha) \ar[r]\ar[d] & \prod_{i\in \beta}\mathcal R(\alpha\cup\{i\}) \ar[d] \\
0\ar[r] & {\sf E}_{\rm fib}(\mathcal
R)(\alpha\cup\{k\},\beta)\ar[r] & \mathcal R(\alpha\cup\{k\})
\ar[r] & \prod_{i\in \beta}\mathcal R(\alpha\cup\{i,k\}), }$$
where the right column is a fibration sequence. Using that
$\mathcal F(\alpha,\beta\cup\{k\})_d={\rm Ker}({\sf E}_{\rm
fib}(\alpha,\beta)_d\to {\sf E}_{\rm
fib}(\alpha\cup\{k\},\beta)_d)$ it is easy to deduce from the
diagram that $\mathcal F(\alpha,\beta\cup\{k\})_d={\rm
Ker}(\mathcal R(\alpha)_d\to \prod_{i\in \beta\cup \{k\}}\mathcal
R(\alpha \cup \{i\})_d)={\sf E}_{\rm fib}(\mathcal
R)(\alpha,\beta\cup \{k\})_d.$
\end{proof}

\subsection{Cubes of fibrations and good tuples of  ideals}

An {\bf $n$-tuple of ideals} of a s.r. is a tuple
$I=(R;I_1,\dots,I_n),$ where $R$ is a simplicial ring and $I_i$
are (simplicial) ideals of $R.$ For $\beta \subseteq \langle n
\rangle$ we set
$$I(\beta)=\bigcap_{i\in \beta} I_i.$$
An $n$-tuple of ideals $I$ is said to be {\bf good} if for any
disjoint subsets $\alpha,\beta \subset \langle n \rangle$ and
$k\in \langle n\rangle\setminus (\alpha \cup \beta)$ the following
equality holds
\begin{equation}\label{eq_good}
I(\beta\cup \{k\})\cap \left(\sum_{i\in \alpha} I(\beta\cup
\{i\})\right)=\sum_{i\in \alpha} I(\beta\cup \{k,i\}).
\end{equation}
It easy to check that any $2$-tuple of ideals is always good. But
a $3$-tuple of ideals is good if and only if for any $i,j,k\in
\{1,2,3\}$ the following holds $I_i\cap (I_j+I_k)=I_i\cap
I_j+I_i\cap I_k.$

For an $n$-tuple of ideals $I=(R;I_1,\dots,I_n)$ we consider the
functor ${\sf E}_{\rm idl}(I):\{-1,0,1\}^n\to {\sf sRng}$ given by
\begin{equation}\label{eq_E_idl}
{\sf E}_{\rm idl}(\mathcal
I)(\alpha,\beta)=\frac{I(\beta)}{\sum_{i\in \alpha} I(\beta\cup
\{i\})}
\end{equation}
with obvious morphisms. For example ${\sf E}_{\rm idl}(R;I,J)$
looks as follows:
$$\xymatrix{
I\cap J\ar[r]\ar[d] & I\ar[r]\ar[d] & I/(I\cap J)\ar[d]\\
J\ar[r]\ar[d] & R\ar[r]\ar[d] & R/J\ar[d] \\
J/(I\cap J)\ar[r] & R/I\ar[r] & R/(I+J). }$$ Note that this
definition can be rewritten in a way dual to \eqref{eq_E_fib}:
${\sf E}_{\rm idl}(I)(\alpha,\beta )={\rm
Coker}\left(\coprod_{i\in \alpha} I(\beta \cup \{i\})\to I(\beta)
\right).$

For an $n$-cube cube of fibrations $\mathcal F$ we consider an
$n$-tuple of ideals
\begin{equation}\label{eq_T_idl}
{\sf T}_{\rm idl}(\mathcal F)=(R;I_1,\dots,I_n), \ \
R=\mathcal{F}(\emptyset,\emptyset),\ \  I_i={\rm Ker}(\mathcal
F(\emptyset,\emptyset)\to \mathcal{F}(\{i\},\emptyset)).
\end{equation}

\begin{lemma}\label{Lemma_good_ideals}Let $I$ be an  $n$-tuple of ideals of a s.r. Then the following statements are equivalent.
\begin{enumerate}
\item $I$ is good; \item $I={\sf T}_{\rm idl}(\mathcal F)$ for
some  $n$-cube of fibrations $\mathcal F$; \item  ${\sf E}_{\rm
idl}(I)$ is an $n$-cube of fibrations.
\end{enumerate}
Moreover, if $I$ is good, ${\sf E}_{\rm idl}(I)$ is the unique (up
to unique isomorphism that respects the equalities) $n$-cube of
fibrations such that $I={\sf T}_{\rm idl}({\sf E}_{\rm idl}(I))$.
\end{lemma}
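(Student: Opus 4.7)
The plan is to mirror the proof of Lemma \ref{Lemma_fibrant_cubes}, but dually: where that lemma used the kernel construction ${\sf E}_{\rm fib}$, here we work with the cokernel construction ${\sf E}_{\rm idl}$. I would establish the cycle (1)$\Leftrightarrow$(3), (3)$\Rightarrow$(2), (2)$\Rightarrow$(3), with the last implication also delivering the uniqueness clause via a canonical identification $\mathcal F \iso {\sf E}_{\rm idl}(I)$. Throughout, all arguments are carried out degree-wise, so they reduce to a purely ring-theoretic statement in each simplicial degree.

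For (1)$\Leftrightarrow$(3), the argument is purely algebraic. Fix disjoint $\alpha,\beta\subseteq\langle n\rangle$ and $k\in \langle n\rangle\setminus(\alpha\cup\beta)$. The map ${\sf E}_{\rm idl}(I)(\alpha,\beta)\to {\sf E}_{\rm idl}(I)(\alpha\cup\{k\},\beta)$ is automatically surjective (the target is a further quotient of the source), and its kernel equals
$$\frac{\sum_{i\in\alpha\cup\{k\}} I(\beta\cup\{i\})}{\sum_{i\in\alpha} I(\beta\cup\{i\})} \iso \frac{I(\beta\cup\{k\})}{I(\beta\cup\{k\})\cap \sum_{i\in\alpha} I(\beta\cup\{i\})}$$
by the second isomorphism theorem. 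The canonical projection from ${\sf E}_{\rm idl}(I)(\alpha,\beta\cup\{k\}) = I(\beta\cup\{k\})/\sum_{i\in\alpha} I(\beta\cup\{k,i\})$ onto this kernel is an isomorphism if and only if the two denominators coincide, which is exactly the good condition \eqref{eq_good}. The implication (3)$\Rightarrow$(2) is then immediate: take $\mathcal F = {\sf E}_{\rm idl}(I)$, noting that ${\sf E}_{\rm idl}(I)(\emptyset,\emptyset) = R$ and $\ker(R\to R/I_k) = I_k$, so ${\sf T}_{\rm idl}({\sf E}_{\rm idl}(I)) = I$.

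For (2)$\Rightarrow$(3) together with uniqueness, suppose $\mathcal F$ is an $n$-cube of fibrations with ${\sf T}_{\rm idl}(\mathcal F)=I$. I would prove by double induction that $\mathcal F(\alpha,\beta) = I(\beta)/\sum_{i\in\alpha} I(\beta\cup\{i\})$ under the canonical maps induced by functoriality. First, by induction on $|\beta|$, identify $\mathcal F(\emptyset,\beta)$ with the ideal $I(\beta)\subseteq R$: the fibration sequence $\mathcal F(\emptyset,\beta\cup\{k\})\hookrightarrow \mathcal F(\emptyset,\beta)\twoheadrightarrow \mathcal F(\{k\},\beta)$, combined with the observation that the composite $\mathcal F(\{k\},\beta)\hookrightarrow\cdots\hookrightarrow\mathcal F(\{k\},\emptyset)=R/I_k$ is a chain of fibre inclusions (hence injective), identifies $\mathcal F(\emptyset,\beta\cup\{k\})$ with the kernel of $I(\beta)\hookrightarrow R\twoheadrightarrow R/I_k$, namely $I(\beta)\cap I_k = I(\beta\cup\{k\})$. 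Next, by induction on $|\alpha|$, writing $\alpha=\alpha'\cup\{k\}$, the fibration sequence $\mathcal F(\alpha',\beta\cup\{k\})\hookrightarrow \mathcal F(\alpha',\beta)\twoheadrightarrow \mathcal F(\alpha,\beta)$ presents $\mathcal F(\alpha,\beta)$ as the quotient of $I(\beta)/\sum_{i\in\alpha'} I(\beta\cup\{i\})$ by the image of $I(\beta\cup\{k\})$, yielding the required formula.

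The main obstacle is the bookkeeping in the first induction: one must check that every map appearing really is the canonical one coming from the ring-theoretic structure (an ideal inclusion or a ring-quotient projection), rather than merely an abstract isomorphic copy. This hinges on the fact that in any cube of fibrations every "shrink-$\beta$" arrow arises as the fibre of a fibration and is therefore injective, while every "grow-$\alpha$" arrow is surjective; a routine functoriality argument then places each $\mathcal F(\emptyset,\beta)$ as an honest ideal of $R$ and each $\mathcal F(\alpha,\emptyset)$ as an honest quotient. Once this identification is set up, the fact that $\mathcal F$ is a cube of fibrations simultaneously recovers goodness at every inductive step, and the uniqueness of the isomorphism $\mathcal F\iso {\sf E}_{\rm idl}(I)$ respecting ${\sf T}_{\rm idl}(\mathcal F)=I$ follows from the canonical nature of the identifications.
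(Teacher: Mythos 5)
Your proposal is correct and follows essentially the same route as the paper: the same cycle of implications, the same observation that (1)$\Leftrightarrow$(3) reduces to injectivity of $I(\beta\cup\{k\})/\sum_{i\in\alpha}I(\beta\cup\{i,k\})\to I(\beta)/\sum_{i\in\alpha}I(\beta\cup\{i\})$ being equivalent to \eqref{eq_good}, and the same double induction (first on $|\beta|$ to identify $\mathcal F(\emptyset,\beta)$ with $I(\beta)$, then on $|\alpha|$ via the short exact sequence) for (2)$\Rightarrow$(3) and uniqueness. The only minor divergence is in the first induction's step: the paper fixes two elements $k,l\in\beta$ and invokes Lemma \ref{Lemma_pullback} to conclude $\mathcal F(\emptyset,\beta)=I(\beta\setminus\{k\})\cap I(\beta\setminus\{l\})$, whereas you identify the fibre of $I(\beta)\twoheadrightarrow\mathcal F(\{k\},\beta)$ directly as $I(\beta)\cap I_k$ by composing with the injective chain of fibre inclusions into $\mathcal F(\{k\},\emptyset)\cong R/I_k$ --- both arguments are valid.
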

\begin{proof}
(1)$\Leftrightarrow$(3). Let $\alpha,\beta\subset  \langle n
\rangle$ be disjoint sets and $k\in \langle n \rangle \setminus
(\alpha\cup \beta).$ Consider the sequence
$$\frac{I(\beta\cup\{k\})}{\sum_{i\in \alpha}I(\beta\cup\{i,k\})}\longrightarrow \frac{I(\beta)}{\sum_{i\in \alpha}I(\beta\cup\{i\})}\longrightarrow \frac{I(\beta)}{\sum_{i\in \alpha\cup\{k\}}I(\beta\cup\{i\})}.$$ It is easy to see that the right hand map is en epimorphism and that it is exact in the middle term. Moreover, the left hand homomorphism is a monomorphism if and only if \eqref{eq_good} holds.

(3)$\Rightarrow$(2). Follows from the equality ${\sf T}_{\rm
idl}({\sf E}_{\rm idl}(I))=I.$

(2)$\Rightarrow$(3). Assume that $I={\sf T}_{\rm idl}(\mathcal F)$
for some  $n$-cube of fibrations  $\mathcal F.$ Replacing
$\mathcal F$ by isomorphic one, we can assume that the fibres are
identical embeddings. First we prove that $I(\beta)=\mathcal
F(\emptyset,\beta).$  The prove is by induction on $|\beta|.$ If
$|\beta|=0,1$ it is obvious. Assume that $|\beta|\geq 2$ and fix
two distinct elements $k,l\in \beta.$ By induction hypothesis we
have $\mathcal F(\emptyset,\beta\setminus \{k\})=I(\beta\setminus
\{k\})$ and $\mathcal F(\emptyset,\beta\setminus
\{l\})=I(\beta\setminus \{l\}).$ Consider the diagram
 $$\xymatrix{0\ar[r] &
\mathcal F(\emptyset,\beta)\ar[r]\ar[d] & I(\beta\setminus \{k\})\ar[r]\ar[d] & \mathcal F(\{k\},\beta)\ar[d]\ar[r] & 0\\
0\ar[r] &I(\beta\setminus \{l\})\ar[r] & I({\beta\setminus\{k,l\}})\ar[r] & \mathcal F(\{k\},{\beta\setminus\{k,l\}})\ar[r] & 0, \\
}$$ whose rows are short exact sequences. Since the left square
consists of monomorphisms and the map on the cokernels $\mathcal
F(\{k\},\beta)\to \mathcal  F(\{k\},{\beta\setminus\{k,l\}})$ is a
monomorphism, by Lemma \ref{Lemma_pullback}  we get that the left
square is a pulback square. Hence, $\mathcal
F(\emptyset,\beta)=I(\beta\setminus \{k\}) \cap I(\beta\setminus
\{l\})=I(\beta).$

So we have $\mathcal F(\emptyset,\beta)={\sf E}_{\rm
idl}(\emptyset,\beta).$ Further we prove by induction on $
|\alpha|$ that there is a unique isomorphism $\mathcal
F(\alpha,\beta)\cong {\sf E}_{\sf idl}(I)(\alpha,\beta)$ that
satisfies commutation properties and lifts this equality for
$\alpha=\emptyset$. Assume that this holds for $\alpha$ and prove
it for $\alpha\cup\{k\}.$ By the induction hypothesis we have that
$\mathcal{F}(\alpha,\beta\cup\{k\})=\frac{I(\beta\cup\{k\})}{\sum_{i\in
\alpha}I(\beta\cup\{i,k\})}$ and
$\mathcal{F}(\alpha,\beta)=\frac{I(\beta)}{\sum_{i\in
\alpha}I(\beta\cup\{i\})}.$ It follows that there is a short exact
sequence
$$0\to \frac{I(\beta\cup\{k\})}{\sum_{i\in \alpha}I(\beta\cup\{i,k\})}\longrightarrow \frac{I(\beta)}{\sum_{i\in \alpha}I(\beta\cup\{i\})}\longrightarrow \mathcal F(\alpha\cup\{k\},\beta)\to 0,$$ which induces the required isomorphism  $\mathcal F(\alpha,\beta)\cong \frac{I(\beta)}{\sum_{i\in \alpha\cup\{k\}}I(\beta\cup\{i\})}.$
\end{proof}

\subsection{Three equivalent categories}
Consider the truncation functor
\begin{equation}\label{eq_T_fib}
{\sf T}_{\rm fib}:\text{(}n\text{-cubes of fibrations of
s.r.})\longrightarrow \text{(fibrant }n\text{-cubes of s.r.)}
\end{equation}
that induced by the embedding $\{0,1\}^n\subset \{-1,0,1\}^n.$
Lemma \ref{Lemma_fibrant_cubes} implies that this functor is well
defined.

\begin{proposition}The functors
\begin{equation}
\xymatrix{
\text{\rm (fibrant } n\text{\rm-cubes of s.r.)}\ar@<+10pt>[d]^{\sf E_{\rm fib}}_{\sim\hspace{1.5mm}}  \\
\text{\rm(}n\text{\rm-cubes of fibrations of s.r.)}\ar@<+10pt>[d]^{{\sf T}_{\rm idl}}_{\sim\hspace{1.5mm}}\ar@<+10pt>[u]^{{\sf T}_{\rm fib}}\\
\text{\rm(good }n\text{\rm-tuples of ideals of s.r.)}\ar@<+10pt>[u]^{{\sf E}_{\rm idl}}\\
}
\end{equation}
given by \eqref{eq_E_fib}, \eqref{eq_E_idl},\eqref{eq_T_fib} and
\eqref{eq_T_idl} define mutually invert equivalences of
categories.
\end{proposition}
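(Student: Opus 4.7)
The plan is to read this off directly from Lemmas \ref{Lemma_fibrant_cubes} and \ref{Lemma_good_ideals}, which already do essentially all of the work on the level of objects; the remaining task is to check that the four functors behave well on morphisms and that the resulting isomorphisms are natural.

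First I would verify functoriality. The truncations ${\sf T}_{\rm fib}$ and ${\sf T}_{\rm idl}$ are manifestly functorial: one is restriction along the embedding $\{0,1\}^n\hookrightarrow\{-1,0,1\}^n$, and the other sends a morphism of $n$-cubes of fibrations to its value at $(\emptyset,\emptyset)$ together with the induced maps on the kernels in \eqref{eq_T_idl}. For ${\sf E}_{\rm fib}$ and ${\sf E}_{\rm idl}$, a morphism of fibrant cubes (respectively, a morphism of good $n$-tuples of ideals) induces, for every disjoint pair $\alpha,\beta$, a well-defined map on the kernels of \eqref{eq_E_fib} (respectively, on the quotients of \eqref{eq_E_idl}); these assemble into a natural transformation of functors on $\{-1,0,1\}^n$.

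Next I would extract the two equivalences from the lemmas. By the definition of ${\sf E}_{\rm fib}$ and ${\sf T}_{\rm fib}$ we have ${\sf T}_{\rm fib}\circ{\sf E}_{\rm fib}={\rm id}$ on the nose, and the uniqueness clause of Lemma \ref{Lemma_fibrant_cubes} produces, for every $n$-cube of fibrations $\mathcal F$, a canonical isomorphism $\mathcal F\cong {\sf E}_{\rm fib}({\sf T}_{\rm fib}(\mathcal F))$ respecting the embedding of ${\sf T}_{\rm fib}(\mathcal F)$. Lemma \ref{Lemma_good_ideals} gives the analogous statements for the other pair: ${\sf T}_{\rm idl}\circ{\sf E}_{\rm idl}={\rm id}$ holds strictly, and there is a canonical isomorphism ${\sf E}_{\rm idl}({\sf T}_{\rm idl}(\mathcal F))\cong \mathcal F$ respecting the equality of their underlying $n$-tuples of ideals. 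Composing, the functors ${\sf T}_{\rm idl}\circ{\sf E}_{\rm fib}$ and ${\sf T}_{\rm fib}\circ{\sf E}_{\rm idl}$ between the top and bottom categories are then mutually inverse up to natural isomorphism.

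The only subtle point, and the step I expect to require the most care, is naturality of the isomorphisms $\mathcal F\cong {\sf E}_{\rm fib}({\sf T}_{\rm fib}(\mathcal F))$ and $\mathcal F\cong {\sf E}_{\rm idl}({\sf T}_{\rm idl}(\mathcal F))$: the two lemmas produce their components one object at a time, so one has to verify compatibility with morphisms. The argument is the standard uniqueness trick: given a morphism $\mathcal F\to\mathcal F'$ of $n$-cubes of fibrations, both routes around the naturality square restrict to the same morphism on the underlying fibrant cube (respectively, on the underlying good $n$-tuple of ideals) and both respect the fibration sequences, so the uniqueness clause of the relevant lemma forces them to coincide. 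This promotes the object-level bijections of Lemmas \ref{Lemma_fibrant_cubes} and \ref{Lemma_good_ideals} to equivalences of categories.
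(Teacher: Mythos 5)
Your proposal is correct and follows essentially the same route as the paper: the composites ${\sf T}_{\rm fib}{\sf E}_{\rm fib}$ and ${\sf T}_{\rm idl}{\sf E}_{\rm idl}$ are identities on the nose, and the other two composites are identified with the identity via the uniqueness clauses of Lemmas \ref{Lemma_fibrant_cubes} and \ref{Lemma_good_ideals}. The paper's proof is terser and leaves functoriality and naturality implicit, whereas you spell them out; that is a welcome addition rather than a different argument.
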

\begin{proof}
The equalities ${\sf T}_{\rm fib}{\sf E}_{\rm fib}={\sf Id},$
${\sf T}_{\rm idl}{\sf E}_{\rm idl}={\sf Id}$ are obvious. The
isomorphisms ${\sf E}_{\rm fib}{\sf T}_{\rm idl}\cong {\sf Id}$
and ${\sf E}_{\rm fib}{\sf T}_{\rm idl}\cong {\sf Id}$  follow
from Lemma \ref{Lemma_fibrant_cubes} and Lemma
\ref{Lemma_good_ideals}.
\end{proof}

Consider the functor
\begin{equation}\label{eq_fibre}
{\sf Fibre}:\text{\rm (fibrant } n\text{\rm-cubes of s.r.)}
\overset{\sim}\longrightarrow \text{\rm(good }n\text{\rm-tuples of
ideals of s.r.)},
\end{equation}
given by ${\sf Fibre}(\mathcal R)=(R;I_1,\dots, I_n),$ where
$R=\mathcal R(\emptyset)$ and  $I_i={\rm Ker}(\mathcal
R(\emptyset)\to \mathcal R(\{i\})).$

\begin{corollary}The functor \eqref{eq_fibre} is an  equivalence of categories.
\end{corollary}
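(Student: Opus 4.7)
The plan is to recognize the functor ${\sf Fibre}$ as the composition of the two equivalences already built in the proposition that precedes this corollary. More precisely, I would argue that
\[
{\sf Fibre} \;=\; {\sf T}_{\rm idl} \circ {\sf E}_{\rm fib},
\]
which, since each factor is an equivalence by the previous proposition, immediately gives that ${\sf Fibre}$ is an equivalence of categories.

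To verify this identification, I would unfold both sides on a fibrant $n$-cube $\mathcal R$. By definition \eqref{eq_E_fib},
\[
{\sf E}_{\rm fib}(\mathcal R)(\emptyset,\emptyset) \;=\; {\rm Ker}\!\Bigl(\mathcal R(\emptyset)\to \prod_{i\in\emptyset}\mathcal R(\{i\})\Bigr) \;=\; \mathcal R(\emptyset),
\]
and similarly ${\sf E}_{\rm fib}(\mathcal R)(\{i\},\emptyset)=\mathcal R(\{i\})$, with the structure map between these two rings being the canonical map $\mathcal R(\emptyset)\to\mathcal R(\{i\})$. Plugging into \eqref{eq_T_idl} therefore gives exactly the tuple $(R;I_1,\dots,I_n)$ with $R=\mathcal R(\emptyset)$ and $I_i={\rm Ker}(\mathcal R(\emptyset)\to\mathcal R(\{i\}))$, which is the definition of ${\sf Fibre}(\mathcal R)$. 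A parallel unpacking of morphisms shows the identification is natural, so indeed ${\sf Fibre}={\sf T}_{\rm idl}\circ{\sf E}_{\rm fib}$ as functors.

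There is no real obstacle here: the entire content of the corollary is packaged in the proposition, and the only thing to check is that the composition of the vertical arrows in the displayed diagram \emph{is} ${\sf Fibre}$. The mild bookkeeping to watch is that ${\sf E}_{\rm fib}$ takes values on the enlarged index poset $\{-1,0,1\}^n$ while ${\sf T}_{\rm idl}$ only reads off data at $(\emptyset,\emptyset)$ and $(\{i\},\emptyset)$; as noted, on these indices ${\sf E}_{\rm fib}(\mathcal R)$ reproduces $\mathcal R$, so the extra indices are irrelevant for the composition and the identification with ${\sf Fibre}$ is immediate.
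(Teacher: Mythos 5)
Your identification ${\sf Fibre}={\sf T}_{\rm idl}\circ{\sf E}_{\rm fib}$ is correct, and the unwinding of the definitions at the indices $(\emptyset,\emptyset)$ and $(\{i\},\emptyset)$ is exactly the check needed; the paper leaves the corollary without proof precisely because this is the intended (and essentially only) argument. Your proposal matches the paper's approach.
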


\subsection{Crossed cubes of cubes of simplicial rings}
 Following Ellis \cite{Ellis} we define
a {\bf crossed $n$-cube of rings} $\{R_\beta\}$ as a family of
rings, where $\beta\subseteq \langle n \rangle$ together with
homomorphisms $\mu_i:R_\beta \to R_{\beta\setminus \{i\}}$ and
$h:R_\beta\otimes R_{\beta'} \to R_{\beta\cup \beta'}$ such that
for $a,a'\in R_\beta,$ $b,b'\in R_{\beta'},$ $c\in R_{\beta''}$
and $i,j\in \langle n \rangle$ such that
\begin{itemize}
\item $\mu_i a=a$ if $i\notin \beta;$ \item
$\mu_i\mu_ja=\mu_j\mu_ia;$ \item $\mu_ih(a\otimes
b)=h(\mu_ia\otimes b)=h(a\otimes \mu_ib);$ \item $h(a\otimes
b)=h(\mu_ia\otimes b)=h(a\otimes \mu_ib)$ if $i\in \beta\cap
\beta';$ \item $h(a\otimes a')=aa';$
\end{itemize}
with the assotiative property:
\begin{itemize}
\item $h(h(a\otimes b)\otimes c)=h(a\otimes h(b\otimes c)).$
\end{itemize}
Morphisms of crossed $n$-cubes are defined obviously. Consider the
functor
\begin{equation}\label{eq_pi0}
\pi_0: \text{(}n\text{-tuples of ideals of s.r.)} \xrightarrow{\ \
\ \ } \text{(crossed } n\text{-cubes of r.)}
\end{equation}
that sends $I$ to $\{R_\beta\},$ where $R_\beta=\pi_0I(\beta),$
$\mu_i=\pi_0(I(\beta)\hookrightarrow I(\beta\setminus \{i\}))$
 and $h:R_{\beta}\otimes R_{\beta'} \to R_{\beta\cup \beta'}$ is the
 composition of the isomorphism $\pi_0I(\beta)\otimes \pi_0I({\beta'})\cong \pi_0(I(\beta)\otimes\mathcal  I({\beta'}))$
 and the map $\pi_0(I_{\beta} \otimes I_{\beta'}\to I_{\beta\cup {\beta'}}).$ It is easy to check that $\{R_\beta\}$ is a crossed $n$-cube of rings.

In the Reedy model structure on the category of $n$-cubes there is
a functorial fibrant replacement
$$\gamma:\mathcal R\overset{\sim}\longrightarrow \overline{\mathcal R},$$
where $\overline{\mathcal R}$ is a fibrant $n$-cube and $\gamma$
is a weak equivalence and a cofibration. Then consider the functor
\begin{equation}\label{pi}\Pi:\text{(}n\text{-cubes of s.r.)} \longrightarrow
\text{(crossed }n\text{-cubes of rings)},\end{equation} given by
$\Pi(\mathcal R)=\pi_0({\sf Fibre}(\overline{\mathcal R})),$ which
is analogue of the  one constructed in \cite{Brown_Loday} for
simplicial rings. Analysing the definition of $\Pi$ we get the
following. If $\mathcal R$ is an $n$-cube of s.r. we can embed it
into a $n$-cube of {\bf homotopy} fibration sequences $\mathcal F$
in the {\bf homotopy category} of simplicial rings by taking
homotopy fibres of all arrows, and then
$$\Pi(\mathcal R)_\beta=\pi_0(\mathcal F(\emptyset,\beta)).$$

\section{Proof of theorem \ref{main}}

\subsection{Proof of theorem \ref{main}} Now suppose that $R, S, T$ are normal subgroups of a
group $G$. Let $X$ be a homotopy pushout of the following diagram
of classifying spaces: \begin{equation}\label{pd}{\tiny { \xyma{&
& BG \ar@{->}[rr] \ar@{->}[ldd]
\ar@{-}[dd] && B(G/R) \ar@{->}[ddd] \ar@{->}[ldd]\\ \\
& B(G/S) \ar@{->}[ddd] \ar@{->}[rr] & \ar@{->}[d] & B(G/RS) \ar@{->}[ddd]\\
& & B(G/T) \ar@{-}[r] \ar@{->}[ldd] & \ar@{->}[r] & B(G/RT)
\ar@{->}[ldd]\\ & & \\ & B(G/ST) \ar@{->}[rr] & & X  } }}
\end{equation}
There is a natural isomorphism of groups (see \cite{EM}):
$$
\pi_1(X)\simeq G/RST.
$$
Certain higher homotopy groups of $X$ are described in \cite{EM}.
In particular, there is a natural isomorphism of
$\pi_1(X)$-modules:
\begin{equation}\label{ellis}
\pi_3(X)\simeq \frac{R\cap S\cap T}{\|R,S,T\|}
\end{equation}
where the action of $\pi_1(X)\simeq G/RST$ on the right hand side
of (\ref{ellis}) is viewed via conjugation in $G$. Recall the idea
of the proof from \cite{EM}. Extend the above pushout to the cube
of fibrations which have 27 spaces. The $\pi_1$ of the complement
to the pushout in the cube of fibrations
$$
{\tiny { \xyma{& & \pi_1({\sf upper}\ {\sf corner}) \ar@{->}[rr]
\ar@{->}[ldd]
\ar@{-}[dd] && S\cap T \ar@{->}[ddd] \ar@{->}[ldd]\\ \\
& R\cap T \ar@{->}[ddd] \ar@{->}[rr] & \ar@{->}[d] & T \ar@{->}[ddd]\\
& & R\cap S \ar@{-}[r] \ar@{->}[ldd] & \ar@{->}[r] & S \ar@{->}[ldd]\\
& & \\ & R \ar@{->}[rr] & & G  } }}
$$
is a crossed cube of groups. Moreover, it is a {\it universal}
crossed cube of groups (see \cite{EM} for definition and
discussion of the universality). One can realize the pushout
diagram as a diagram of simplicial groups. The functor of group
rings ${\mathbb Z}[-]: {\sf groups}\to {\sf group}\ {\sf rings}$
sends pushouts to the pushouts in the category of simplicial
rings. Extending this pushout diagram to a cube of homotopy
fibration sequences in the category of simplicial rings and taking
$\pi_0$ of the complement part as in (\ref{pi}) we obtain the
crossed cube of rings
$${\small
{ \xyma{& & \pi_0({\sf upper}\ {\sf corner}) \ar@{->}[rr]^{\mu_1}
\ar@{->}[ldd]^{\mu_3}
\ar@{-}[dd]^{\mu_2} && {\bf s}\cap {\bf t} \ar@{->}[ddd] \ar@{->}[ldd]\\ \\
& {\bf r}\cap {\bf s} \ar@{->}[ddd] \ar@{->}[rr] & \ar@{->}[d] & {\bf s} \ar@{->}[ddd]\\
& & {\bf r}\cap {\bf t} \ar@{-}[r] \ar@{->}[ldd] & \ar@{->}[r] &
{\bf t} \ar@{->}[ldd]\\ & & \\ & {\bf r} \ar@{->}[rr] & & \mathbb
Z[G] } }}
$$
Now we observe that
$$
\|{\bf r},{\bf s},{\bf t}\|\subseteq {\rm Im}(\mu_i),\ i=1,2,3.
$$
This follows from the properties of crossed cubes
$$\mu_ih(a\otimes b)=h(\mu_ia\otimes b)=h(a\otimes \mu_ib),\ i=1,2,3$$
for $a\in {\bf r\cap \bf s}, b\in {\bf t}$ and other choices of
ideals. Applying homotopy exact sequences of fibrations three
times, and comparing them for simplicial groups and group rings,
we obtain the needed commutative diagram
$$
\xyma{\frac{R\cap S\cap T}{\|R,S,T\|}\ar@{->}[r]\ar@{=}[d] &
\frac{\bf
r\cap \bf s\cap \bf r}{\|\bf r,\bf s,\bf t\|}\ar@{->}[d]\\
\pi_2(\Omega X)\ar@{->}[r]^{h_2\Omega}& H_2(\Omega X)}
$$
which considered together with proposition \ref{h2prop} imply the
needed statement. Theorem \ref{main} follows.\ \ $\Box$

\subsection{The case of two subgroups} Observe that, in the case of two normal subgroups $R,S$
in $G$ is much simpler than the above case. In this case, one has
a square of fibrations (in the category of simplicial rings)
$$
\xyma{T\ar@{->}[r] \ar@{->}[d] \ar@{->}[r] &
{\bf r} \ar@{->}[d] \ar@{->}[r] & fib_2\ar@{->}[d]\\
{\bf s}\ar@{->}[r]\ar@{->}[d] & \mathbb Z[G] \ar@{->}[r] \ar@{->}[d] & \mathbb Z[G/S]\ar@{->}[d] \\
fib_1\ar@{->}[r] & \mathbb Z[G/R] \ar@{->}[r] & \mathbb Z[X]}
$$
such that
$$
\xyma{\pi_0(T) \ar@{->}[r]^{\mu_2} \ar@{->}[d]^{\mu_1} & {\bf r}\ar@{->}[d]\\
{\bf s}\ar@{->}[r] & \mathbb Z[G]}
$$
is a crossed square of rings. Since $\mu_1h(a\otimes b)=ab,\ a\in
{\bf s}, b\in {\bf r},$ and $\mu_1h(a\otimes b)=ab,\ a\in {\bf
s},b\in {\bf r}$, ${\bf rs+sr}\subseteq {\rm Im}(\mu_1)$ and ${\bf
rs+sr}\subseteq {\rm Im}(\mu_2)$. Comparing the picture for groups
and group rings we conclude that there is a commutative diagram
$$
\xyma{\frac{R\cap S}{[R,S]}\ar@{=}[d]\ar@{>->}[r] & \frac{{\bf
r}\cap {\bf s}}{\bf rs+sr}\ar@{->}[d]\\ \pi_1(\Omega
X)\ar@{->}[r]^{h_1\Omega} & H_1(\Omega X)}
$$
Since, for any connected space $X$, the Hurewicz homomorphism
$\pi_1(\Omega X)\to H_1(\Omega X)$ is a monomorphism, we obtain
the following identification of the generalized dimension subgroup
$$
D(G, {\bf rs+sr})=[R,S].
$$
This gives a new proof of the result from \cite{BD}. This result can be generalized as follows.

Let $T$ be a normal subgroup of subgroup of $G$ and ${\bf
n}=(T-1)\mathbb Z[G]$. We obtain the following diagram
$$
\xyma{\frac{R\cap S}{[R,S][R\cap S,T]}\ar@{=}[d]\ar@{->}[r] &
\frac{{\bf r}\cap {\bf s}}{\bf rs+sr+({\bf r}\cap {\bf s}){\bf t}+{\bf t}({\bf r}\cap {\bf s})}\ar@{->}[d]\\
\pi_1(\Omega X)_{TRS/RS}\ar@{->}[r]^{(h_1\Omega)_{TRS/RS}} &
H_1(\Omega X)_{TRS/RS}.}
$$
Here the group $TRS/RS$ is considered as a subgroup of
$\pi_1(X)=G/RS.$ We show that $(h_1\Omega)_{TRS/RS}\colon
\pi_1(\Omega X)_{TRS/RS}\longrightarrow H_1(\Omega X)_{TRS/RS}$ is
a monomorphism for any normal subgroup $T$ of $G$. Let $G_*$ be a
simplicial group such that the geometric realization $|G_*|$ is
weakly homotopy equivalent to $\Omega X$. Observe that the action
of $\pi_1(X)\cong\pi_0(G_*)$ is induced by the conjugation action
of $G_0$ on $G_*$. Let $\tilde G_*$ be the path-connected
component of $G_*$ containing the identity element. From the
simplicial Postnikov system, there is a short exact sequence of
simplicial groups
$$
1\longrightarrow \tilde G_*\longrightarrow G_*\longrightarrow \pi_0(G)\longrightarrow 1,
$$
where $\pi_0(G)$ is the discrete simplicial group. Hence
$$
G_*=\coprod_{g\in \pi_0(G)} g\tilde G_*
$$
as a simplicial set. Let $\chi_h\colon G_*\to G_*, x\mapsto
hxh^{-1}$ be the conjugation action of $h\in G_0$ on $G_*$. Then
$\chi_h(g\tilde G_*)=hgh^{-1}\tilde G_*$ with
$$
\chi_h(gx)=(hgh^{-1})(hxh^{-1}).
$$
This implies that
$$
H_k(\Omega X)\cong H_*(G_*)\cong H_k(\tilde G_*)\otimes
\Z[\pi_0(G_*)]
$$
as modules over $\Z[\pi_0(G_*)]$ for $k\geq0$, where
$\Z[\pi_0(G_*)]$ acts diagonally on the tensor product $H_k(\tilde
G_*)\otimes \Z[\pi_0(G_*)]$.  It follows that $H_k(\Omega_0X)\cong
H_k(\tilde G_*)$ is a $\Z[\pi_0(G_*)]\cong
\Z[\pi_1(X)]$-equivariant summand of $H_k(\Omega X)$, where
$\Omega_0X$ is the path-connected component of $X$ containing the
basepoint. By taking $k=1$ with using the fact that $\pi_1(\Omega
X)\cong H_1(\Omega_0X)$, we have
 $(h_1\Omega)_{TRS/RS}\colon \pi_1(\Omega X)_{TRS/RS}\longrightarrow H_1(\Omega X)_{TRS/RS}$
 is a monomorphism for any normal subgroup $T$ of $G$. As a consequence, we obtain that
 \begin{equation}
\frac{R\cap S}{[R,S][R\cap S,T]}\longrightarrow  \frac{{\bf r}\cap
{\bf s}}{\bf rs+sr+({\bf r}\cap {\bf s}){\bf t}+{\bf t}({\bf
r}\cap {\bf s})}
\end{equation}
is a monomorphism for any normal subgroup $T\leq G$. We proved the
following
\begin{theorem}\label{modg}
For a group $G$ and its normal subgroups $R,S,T$,
$$
D(G,{\bf rs+sr}+({\bf r}\cap {\bf s}){\bf t}+{\bf t}({\bf r}\cap
{\bf s}))=[R,S][R\cap S,T].
$$
\end{theorem}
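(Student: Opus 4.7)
The plan is to take the commutative square obtained in the preceding two-subgroup argument and pass to coinvariants under the conjugation action of the subgroup $TRS/RS \leq \pi_1(X) \cong G/RS$. First I would form $X$ as the homotopy pushout of $B(G/R) \leftarrow BG \to B(G/S)$ and rerun the two-subgroup argument: apply $\mathbb{Z}[-]$ to the underlying pushout of simplicial groups, extend to a homotopy fibration square of simplicial rings, and feed the result into the functor $\Pi$ from \eqref{pi}. This yields a crossed square of rings with vertices $\mathbf{r}\cap\mathbf{s},\mathbf{r},\mathbf{s},\mathbb{Z}[G]$; the crossed-square identity $\mu_i h(a\otimes b)=ab$ forces $\mathbf{rs+sr}\subseteq \mathrm{Im}(\mu_i)$, and comparison with the corresponding crossed square of groups yields the commutative square of the two-subgroup case, whose top row is the injection $R\cap S/[R,S]\hookrightarrow (\mathbf{r}\cap\mathbf{s})/(\mathbf{rs+sr})$ realized as the Hurewicz map $\pi_1(\Omega X)\hookrightarrow H_1(\Omega X)$.

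Next I would take $TRS/RS$-coinvariants of that square. On the group side, $(R\cap S/[R,S])_{TRS/RS} = R\cap S/([R,S][R\cap S,T])$, since conjugation by $t\in T$ on $x\in R\cap S$ introduces the commutator $[t,x]$ into the relations. On the ring side, the identity $txt^{-1}-x = (t-1)x + tx(t^{-1}-1)$ shows that coinvariants modify the denominator by exactly $(\mathbf{r}\cap\mathbf{s})\mathbf{t}+\mathbf{t}(\mathbf{r}\cap\mathbf{s})$, so the coinvariants of $(\mathbf{r}\cap\mathbf{s})/(\mathbf{rs+sr})$ are $(\mathbf{r}\cap\mathbf{s})/(\mathbf{rs+sr}+(\mathbf{r}\cap\mathbf{s})\mathbf{t}+\mathbf{t}(\mathbf{r}\cap\mathbf{s}))$. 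This gives the commutative square
\[
\xyma{\frac{R\cap S}{[R,S][R\cap S, T]} \ar[r] \ar@{=}[d] & \frac{\mathbf{r}\cap\mathbf{s}}{\mathbf{rs+sr}+(\mathbf{r}\cap\mathbf{s})\mathbf{t}+\mathbf{t}(\mathbf{r}\cap\mathbf{s})} \ar[d] \\ \pi_1(\Omega X)_{TRS/RS} \ar[r]^{(h_1\Omega)_{TRS/RS}} & H_1(\Omega X)_{TRS/RS}}
\]
with the left vertical map an isomorphism.

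The main obstacle is showing that the bottom Hurewicz map remains injective after passage to coinvariants; since coinvariants is only right exact, the classical injectivity of $h_1\Omega$ is insufficient. My plan is to model $\Omega X$ by a simplicial group $G_\bullet$ with $\pi_0(G_\bullet)\cong \pi_1(X)$, decompose it as $G_\bullet=\coprod_{g\in \pi_0(G_\bullet)} g\tilde G_\bullet$ over its identity path component $\tilde G_\bullet$, and observe that conjugation by $h\in G_0$ sends $gx\mapsto (hgh^{-1})(hxh^{-1})$. This produces a $\mathbb{Z}[\pi_1(X)]$-equivariant splitting $H_k(\Omega X)\cong H_k(\tilde G_\bullet)\otimes \mathbb{Z}[\pi_0(G_\bullet)]$ with diagonal action, exhibiting $H_k(\Omega_0 X)=H_k(\tilde G_\bullet)$ as an equivariant direct summand. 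Since $\pi_1(\Omega X)=H_1(\tilde G_\bullet)$ lives inside this summand, injectivity of $h_1\Omega$ restricted to $\Omega_0 X$ survives the passage to $TRS/RS$-coinvariants, establishing the nontrivial inclusion $D(G,\ldots)\subseteq [R,S][R\cap S,T]$.

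The reverse inclusion is a direct calculation: the identity $[g,h]-1 = (gh)^{-1}\bigl((h-1)(g-1)-(g-1)(h-1)\bigr)$ shows that $g-1\in \mathbf{rs+sr}$ for $g\in [R,S]$ and $g-1\in (\mathbf{r}\cap\mathbf{s})\mathbf{t}+\mathbf{t}(\mathbf{r}\cap\mathbf{s})$ for $g\in [R\cap S,T]$. Combined with the hard direction, this yields the equality claimed.
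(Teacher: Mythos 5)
Your proposal is correct and follows essentially the same route as the paper: the crossed-square argument giving $D(G,\mathbf{rs+sr})=[R,S]$, passage to $TRS/RS$-coinvariants of the commutative square, and the $\mathbb{Z}[\pi_0(G_\bullet)]$-equivariant splitting $H_k(\Omega X)\cong H_k(\tilde G_\bullet)\otimes \mathbb{Z}[\pi_0(G_\bullet)]$ to show injectivity of the Hurewicz map survives coinvariants. Your explicit justifications of the coinvariant identifications and of the easy inclusion via the commutator identity are details the paper leaves implicit, but the argument is the same.
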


\section{Generalizations and examples}\label{lastsection}
\subsection{Simplicial groups} Let $G$ be a group and $R_1,\dots, R_n,\ n\geq 2$ its
normal subgroups. Denote
$$
\|R_1,\dots, R_n\|:=\prod_{I\cup J=\{1,\dots,n\}, I\cap
J=\emptyset}[\cap_{i\in I}R_i,\cap_{j\in J}R_j]
$$
Similarly, for a collection $\mathbf{a}_0,\dots, \mathbf{a}_n$ of
ideals in a ring $R$, denote
$$
\| \mathbf{a}_1,\dots, \mathbf{a}_n\|:=\sum_{I\cup
J=\{1,\dots,n\}, I\cap J=\emptyset}(\cap_{i\in
I}\mathbf{a}_i)(\cap_{j\in J}\mathbf{a}_j)+(\cap_{j\in
J}\mathbf{a}_j)(\cap_{i\in I}\mathbf{a}_i).
$$
It is easy to check that for arbitrary ideals ${\bf a}, {\bf b}$
of $\mathbb Z[G]$ we have $D({\bf a})D({\bf b}) \subseteq D({\bf
a}+{\bf b})$ and $[D({\bf a}),D({\bf b})]\subseteq D({\bf a}{\bf
b}+{\bf b}{\bf a}).$ Indeed, the first inclusion is obvious, and
the second follows from the equality
$g^{-1}h^{-1}gh-1=g^{-1}h^{-1}((g-1)(h-1)-(h-1)(g-1))$ for
arbitrary $g,h\in G.$ Therefore, for any $G$ and its normal
subgroups $R_1,\dots, R_n$,
$$
\|R_1,\dots, R_n\|\subseteq D(G,\| \mathbf{r}_1,\dots,
\mathbf{r}_n\|),
$$
where $\mathbf{r}_i=(R_i-1)\mathbb Z[G]$. One can ask how to
identify the generalized dimension quotient in the case of $>3$
subgroups or at least to describe its exponent. Here is an
approach for constructing of different examples.

If $\mathcal G$ is a simplicial group, we denote by $N\mathcal G$
its Moore complex. Cycles of this complex we denote by
$Z_n\mathcal G={\rm Ker}(d:N_n\mathcal G\to N_{n-1}\mathcal G),$
and boundaries by $B_n\mathcal G={\rm Im}(d:N_{n+1}\mathcal G\to
N_n\mathcal G).$ Then $\pi_n\mathcal G=Z_n\mathcal G/B_n\mathcal
G.$ The following theorem is Theorem B of
\cite{Castiglioni_Ladra}.
\begin{theorem}[{\cite{Castiglioni_Ladra}}]\label{Theorem_B} Let
$\mathcal G$ be a simplicial group and $n\geq 1$ such that
$\mathcal G_{n+1}$ is generated by degeneracies. Set $K_i:={\rm
Ker}(d_i:\mathcal G_n\to \mathcal G_{n-1}).$ Then
$$B_n\mathcal G=\|K_0,\dots, K_n\|,\hspace{1cm} \pi_n\mathcal
G=\frac{\bigcap_{i=0}^n K_i}{\|K_0,\dots, K_n\|}.$$
\end{theorem}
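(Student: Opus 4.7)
The plan is to split the theorem into the identity $B_n \mathcal{G} = \|K_0, \dots, K_n\|$ and deduce the formula for $\pi_n \mathcal{G}$ immediately from $\pi_n \mathcal{G} = Z_n \mathcal{G}/B_n \mathcal{G}$ together with the standard identification $Z_n \mathcal{G} = \bigcap_{i=0}^n K_i$ of Moore cycles. The hypothesis that $\mathcal{G}_{n+1}$ is generated by degeneracies will only be needed for one of the two inclusions defining $B_n \mathcal{G} = \|K_0, \dots, K_n\|$.

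For the inclusion $\|K_0, \dots, K_n\| \subseteq B_n\mathcal{G}$, I would show, for each partition $\{0, \dots, n\} = I \sqcup J$ and elements $x \in \bigcap_{i \in I} K_i$, $y \in \bigcap_{j \in J} K_j$, that $[x, y] \in B_n\mathcal{G}$. The method is to construct an explicit $w \in N_{n+1}\mathcal{G}$ as a product of commutators of suitably chosen iterated degeneracies of $x$ and $y$, arranged so that the simplicial identities force $d_j(w) = 1$ for $j \leq n$ while $d_{n+1}(w) = [x, y]$ modulo terms already generated. This is a standard type of simplicial-group computation and does not require the hypothesis on $\mathcal{G}_{n+1}$.

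For the reverse inclusion $B_n \mathcal{G} \subseteq \|K_0, \dots, K_n\|$ I would use the hypothesis crucially. Any $w \in N_{n+1}\mathcal{G}$ is a word in degenerate elements $s_k(a_\alpha) \in \mathcal{G}_{n+1}$, and after normalizing to an ordered form using $s_j s_i = s_i s_{j-1}$ (for $j > i$) each factor carries an explicit label indicating which degeneracies appear. Computing $d_{n+1}$ term by term and imposing the Moore condition $w \in \bigcap_{0 \leq j \leq n} \ker d_j$ forces cancellations that pin each surviving contribution to a commutator $[\bigcap_{i \in I} K_i, \bigcap_{j \in J} K_j]$: the partition $(I, J)$ is read off from the degeneracy labels, and the Moore constraints enforce that the generating elements lie in the correct intersections of kernels.

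The main obstacle is the combinatorics of this second inclusion: a commutator of words in degenerate generators is not immediately a single basic commutator, and untangling it requires iterated application of the Hall--Witt identity, each step potentially introducing nested commutators of higher depth. I expect the cleanest treatment is by induction on the number of degenerate generators appearing in $w$, using the commutator-of-two-degeneracies case as base and absorbing deeper iterated commutators into $\|K_0, \dots, K_n\|$ (which is already closed under further nesting of the same kind, since any refinement of the partition $(I,J)$ yields a commutator contained in $[\bigcap_{i \in I} K_i, \bigcap_{j \in J} K_j]$). After this reorganisation, the boundary $d_{n+1}(w)$ should be visibly a product of generators of $\|K_0, \dots, K_n\|$, completing the inclusion.
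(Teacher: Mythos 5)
The paper does not prove this statement: it is quoted verbatim as Theorem B of \cite{Castiglioni_Ladra}, so there is no internal proof to compare against. Judged on its own, your outline gets the architecture right: $Z_n\mathcal G=\bigcap_{i=0}^n K_i$ reduces everything to $B_n\mathcal G=\|K_0,\dots,K_n\|$; the inclusion $\|K_0,\dots,K_n\|\subseteq B_n\mathcal G$ is indeed the standard computation with explicit products of commutators of iterated degeneracies of $x$ and $y$ (the Carrasco--Cegarra/Mutlu--Porter ``Peiffer element'' constructions), and you are correct that the hypothesis on $\mathcal G_{n+1}$ enters only in the reverse inclusion, where it guarantees $N_{n+1}\mathcal G\subseteq D_{n+1}$, the subgroup generated by degenerate elements.

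The gap is in that reverse inclusion, which is exactly the hard half of the theorem (it was an open conjecture of Brown and Loday until \cite{Castiglioni_Ladra}). Your plan --- write $w\in N_{n+1}\mathcal G$ as a normalized word in degenerate generators $s_k(a_\alpha)$, apply $d_{n+1}$, and let the Moore conditions ``pin each surviving contribution'' to a commutator $[\bigcap_{i\in I}K_i,\bigcap_{j\in J}K_j]$ --- does not work as stated, because the conditions $d_jw=1$ constrain only the product $w$ and not the individual letters $a_\alpha$; there is no mechanism for ``reading off'' that a given generator lies in a prescribed intersection of kernels, and the proposed induction on the number of generators via Hall--Witt identities has no visible reason to terminate with all the debris inside $\|K_0,\dots,K_n\|$. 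The missing idea is the Conduch\'e/Carrasco--Cegarra decomposition of $\mathcal G_{n+1}$ as an iterated semidirect product of subgroups $s_\alpha(N_m\mathcal G)$ indexed by the surjections $[n+1]\twoheadrightarrow[m]$: one filters $N_{n+1}\mathcal G\cap D_{n+1}$ according to the sets of degeneracy indices occurring and proves, by induction along that filtration, that $d_{n+1}$ of each piece lands in $\|K_0,\dots,K_n\|$. Without some such structured decomposition the word combinatorics you describe does not close up, so as written the proposal does not establish $B_n\mathcal G\subseteq\|K_0,\dots,K_n\|$.
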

The following theorem is an analogue of the previous one for the
case of simplicial rings. It follows from Theorem A of
\cite{Castiglioni_Ladra}.
\begin{theorem}[{\cite{Castiglioni_Ladra}}]\label{Theorem_A}
Let $\mathcal R$ be a simplicial ring and $n\geq 1$ such that
$\mathcal R_{n+1}$ is generated by degeneracies as a ring. Set
${\mathbf{k}}_i:={\rm Ker}(d_i:\mathcal R_n\to \mathcal R_{n-1}).$
Then
$$B_n\mathcal R=\|\mathbf{k}_0,\dots,\mathbf{k}_n\|, \hspace{1cm}
\pi_n\mathcal R=\frac{\bigcap_{i=0}^n {\bf
k}_i}{\|\mathbf{k}_0,\dots,\mathbf{k}_n\|}.$$
\end{theorem}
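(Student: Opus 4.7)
The two identities in the statement are linked: granted $B_n\mathcal R=\|\mathbf{k}_0,\dots,\mathbf{k}_n\|$, the description of $\pi_n\mathcal R$ is automatic because $Z_n\mathcal R=\bigcap_{i=0}^n\mathbf{k}_i$ (the Moore subcomplex equals $N_n\mathcal R=\bigcap_{i=1}^n\mathbf{k}_i$ and on $N_n$ the alternating-sum differential collapses to $d_0$). The whole content is the ideal equality $B_n\mathcal R=\|\mathbf{k}_0,\dots,\mathbf{k}_n\|$, which I would prove by inclusion in both directions, transporting the proof of Theorem~\ref{Theorem_B} from groups to rings with commutator identities replaced by identities for products of ideals.

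For the inclusion $\|\mathbf{k}_0,\dots,\mathbf{k}_n\|\subseteq B_n\mathcal R$, I would fix an ordered partition $\{0,\dots,n\}=I\sqcup J$ and elements $a\in\bigcap_{i\in I}\mathbf{k}_i$, $b\in\bigcap_{j\in J}\mathbf{k}_j$, and exhibit an explicit $u\in N_{n+1}\mathcal R$ with $d_0(u)=ab$ (symmetrically for $ba$). Picking $i_0\in I$ and $j_0\in J$, the witness is built from the product $s_{j_0}(a)\cdot s_{i_0}(b)$: the simplicial identities $d_ks_\ell=s_{\ell-1}d_k$ for $k<\ell$, $d_\ell s_\ell=d_{\ell+1}s_\ell=\mathrm{id}$, $d_ks_\ell=s_\ell d_{k-1}$ for $k>\ell+1$, together with the kernel memberships of $a$ and $b$, force this element into $N_{n+1}\mathcal R$ up to lower-order corrections that themselves already lie in $\|\mathbf{k}_0,\dots,\mathbf{k}_n\|$ and can be absorbed.

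The reverse inclusion $B_n\mathcal R\subseteq\|\mathbf{k}_0,\dots,\mathbf{k}_n\|$ is where the hypothesis on $\mathcal R_{n+1}$ enters decisively. Since $\mathcal R_{n+1}$ is generated as a ring by $\bigcup_\ell s_\ell(\mathcal R_n)$, every $x\in N_{n+1}\mathcal R$ is a $\mathbb Z$-linear combination of monomials $s_{i_1}(a_1)\cdots s_{i_m}(a_m)$, and computing $d_0(x)$ reduces to computing $d_0$ of each such monomial. Iterated face--degeneracy identities express $d_0\bigl(s_{i_1}(a_1)\cdots s_{i_m}(a_m)\bigr)$ as a sum of products $\bigl(\bigcap_{i\in I}\mathbf{k}_i\bigr)\bigl(\bigcap_{j\in J}\mathbf{k}_j\bigr)$ indexed by partitions $(I,J)$ of $\{0,\dots,n\}$ that arise from the chosen splitting of the monomial and the pattern of degeneracy indices; each such product sits visibly in $\|\mathbf{k}_0,\dots,\mathbf{k}_n\|$.

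The main obstacle is the combinatorial bookkeeping in this last step: for each monomial one must extract a precise statement of which partition and which multiplication order it contributes. I would organize this by induction on monomial length $m$. The base case $m=2$, where $d_0(s_i(a)s_j(b))$ for distinct $i,j$ unfolds by a single application of the simplicial identities into a visible element of a single summand of $\|\mathbf{k}_0,\dots,\mathbf{k}_n\|$, already contains the essential combinatorics; the inductive step peels off one degeneracy factor at a time. A normalization lemma, limiting attention to monomials whose degeneracy indices cover $\{0,\dots,n\}$ (the remaining ones contribute only degenerate noise with no Moore-cycle consequence), closes the argument. The $m=2$ calculation and this normalization together encapsulate all the genuine work; the rest is formal.
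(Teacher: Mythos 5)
First, a point of comparison: the paper does not prove this statement at all --- it is imported verbatim as Theorem~A of \cite{Castiglioni_Ladra} --- so there is no internal proof to measure you against; your sketch has to stand on its own. Your reduction of the second identity to the first is fine ($Z_n\mathcal R=\bigcap_{i=0}^n\mathbf{k}_i$ and $\pi_n\mathcal R=Z_n\mathcal R/B_n\mathcal R$ by definition), and the inclusion $\|\mathbf{k}_0,\dots,\mathbf{k}_n\|\subseteq B_n\mathcal R$ does go roughly as you describe, via explicit Peiffer-type witnesses in $N_{n+1}\mathcal R$; note however that the actual pairings $F_{I,J}(a,b)$ are signed sums of several degenerate monomials rather than a single product $s_{j_0}(a)s_{i_0}(b)$, and the ``lower-order corrections'' must be shown to be boundaries already constructed, not merely elements of $\|\mathbf{k}_0,\dots,\mathbf{k}_n\|$, or the absorption is circular.

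The genuine gap is in the hard inclusion $B_n\mathcal R\subseteq\|\mathbf{k}_0,\dots,\mathbf{k}_n\|$. You write $x\in N_{n+1}\mathcal R$ as a $\mathbb Z$-combination of degenerate monomials and propose to compute $d_0$ of each monomial separately, asserting that each value sits visibly in $\|\mathbf{k}_0,\dots,\mathbf{k}_n\|$. That assertion is false: an individual monomial of the decomposition need not lie in $N_{n+1}\mathcal R$, and its factors $a_k$ are arbitrary elements of $\mathcal R_n$ carrying no kernel membership. Already for $m=1$ one has $d_0(s_0(a))=a$ for arbitrary $a$, and for $m=2$ one has $d_0(s_0(a)s_0(b))=ab$ for arbitrary $a,b$ --- nowhere near the Peiffer ideal. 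Membership of $x$ in $N_{n+1}\mathcal R$ is a condition on the whole linear combination that couples the monomials, and the real content of the theorem (in \cite{Castiglioni_Ladra}, as in the Mutlu--Porter argument for groups) is precisely to exploit the vanishing of the other faces to rewrite $x$, modulo elements with zero boundary, as a combination of the specific pairings $F_{I,J}$ --- equivalently, to show that $N_{n+1}\mathcal R\cap D_{n+1}$ is generated by those pairings, where $D_{n+1}$ is the degenerate part. Your ``normalization lemma'' gestures at this but supplies no mechanism; as written, the monomial-by-monomial computation would be proving the false statement $d_0(D_{n+1})\subseteq\|\mathbf{k}_0,\dots,\mathbf{k}_n\|$.
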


For a simplicial group $\mathcal G$, the Hurewicz homomorphism
$h:\pi_n\mathcal G\to H_n\mathcal G$ for $n\geq 1$ is induced by
the map $\mathcal G\to \mathbb Z[\mathcal G]$ given by $g\mapsto
g-1.$ The following statement is a direct corollary of theorems
\ref{Theorem_B} and \ref{Theorem_A}.

\begin{proposition}\label{simple} Let $\mathcal G$ be a simplicial group and $n\geq 1$ such that $\mathcal G_{n+1}$ is generated by degeneracies.
Set
\begin{align*}
& K_i:={\rm Ker}(d_i:\mathcal G_n\to \mathcal G_{n-1})\\
& {\bf k}_i:={\rm Ker}(d_i: \mathbb Z[\mathcal G_n] \to \mathbb
Z[\mathcal G_{n-1}]).
\end{align*}
Then
$$\frac{D(\mathcal G_n,\|\mathbf{k}_0,\dots,\mathbf{k}_n\|)}{\|K_0,\dots, K_n\|} ={\rm Ker}(h_n: \pi_n\mathcal G \longrightarrow H_n\mathcal G),$$
 where $h_n$ is the $n$th Hurewicz homomorphism.
\end{proposition}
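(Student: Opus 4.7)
The plan is to identify both sides of the proposition using Theorems~\ref{Theorem_A} and~\ref{Theorem_B}, then read off the kernel of the Hurewicz map directly on the Moore complex. First I would apply Theorem~\ref{Theorem_B} to $\mathcal G$, giving $\pi_n\mathcal G=\bigl(\bigcap_{i=0}^n K_i\bigr)/\|K_0,\dots,K_n\|$. Next I would verify that the hypothesis of Theorem~\ref{Theorem_A} transfers from $\mathcal G$ to $\mathbb Z[\mathcal G]$: every element of $\mathcal G_{n+1}$ is a group-theoretic word in degeneracies and their inverses, so the same products realize it inside $\mathbb Z[\mathcal G_{n+1}]$, whence $\mathbb Z[\mathcal G_{n+1}]$ is generated as a ring by degeneracies. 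The face-map kernels of $\mathbb Z[\mathcal G]$ are precisely the $\mathbf k_i$, so Theorem~\ref{Theorem_A} yields $H_n\mathcal G=\pi_n\mathbb Z[\mathcal G]=\bigl(\bigcap_{i=0}^n\mathbf k_i\bigr)/\|\mathbf k_0,\dots,\mathbf k_n\|$.

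Under these presentations the Hurewicz map $h_n$ is induced on cycles by $g\mapsto g-1$, so a class $[g]\in\pi_n\mathcal G$ lies in $\ker h_n$ precisely when $g-1\in\|\mathbf k_0,\dots,\mathbf k_n\|$, i.e.\ when $g\in D(\mathcal G_n,\|\mathbf k_0,\dots,\mathbf k_n\|)$. To land on the displayed quotient it remains to check that $D(\mathcal G_n,\|\mathbf k_0,\dots,\mathbf k_n\|)$ sits between $\|K_0,\dots,K_n\|$ and $\bigcap_i K_i$. The upper inclusion follows from the observation that every summand of $\|\mathbf k_0,\dots,\mathbf k_n\|$ is a product of two ideals $\bigcap_{i\in I}\mathbf k_i$ and $\bigcap_{j\in J}\mathbf k_j$ with $I\sqcup J=\{0,\dots,n\}$, hence sits inside every single $\mathbf k_\ell$; therefore $g-1\in\|\mathbf k_0,\dots,\mathbf k_n\|$ forces $d_\ell g=1$ for each $\ell$. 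The lower inclusion follows from the commutator identity $[a,b]-1=a^{-1}b^{-1}\bigl((a-1)(b-1)-(b-1)(a-1)\bigr)$ applied to $a\in\bigcap_{i\in I}K_i$, $b\in\bigcap_{j\in J}K_j$, combined with $gh-1=(g-1)h+(h-1)$ to close under the group operation.

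There is no real obstacle here: the content of the proposition is genuinely packaged inside Theorems~\ref{Theorem_A} and~\ref{Theorem_B}, and the only mildly subtle step is verifying that the degeneracy hypothesis transfers from $\mathcal G_{n+1}$ to $\mathbb Z[\mathcal G_{n+1}]$, so that both theorems apply at the same level $n$. Once this is in place, the comparison of Moore complexes via $g\mapsto g-1$ realizes $h_n$ explicitly and the description of $\ker h_n$ drops out of the two inclusions above.
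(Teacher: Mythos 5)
Your argument is correct and is exactly the route the paper intends: it states Proposition~\ref{simple} as a direct corollary of Theorems~\ref{Theorem_B} and~\ref{Theorem_A}, and your write-up simply fills in the details (the transfer of the degeneracy hypothesis to $\mathbb Z[\mathcal G_{n+1}]$, the two inclusions $\|K_0,\dots,K_n\|\subseteq D(\mathcal G_n,\|\mathbf k_0,\dots,\mathbf k_n\|)\subseteq\bigcap_i K_i$, and the identification of $h_n$ via $g\mapsto g-1$ on Moore complexes). No discrepancies with the paper's reasoning.
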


The generalized dimension subgroups as in proposition \ref{simple}
were considered in \cite{MPW} for the case of simplicial
Carlsson's constructions. The main example which we will consider
here is the $p$-Moore space $P^3(p)=S^2\cup_{p}e^2$ for a prime
$p\geq 2$. The lowest homotopy group of $P^3(p)$ which contains
$\mathbb Z/p^2$-summand is $\pi_{2p-1}P^3(p)$. This was proved in
\cite{CMN} for $p>3$, however, $\pi_3P^3(2)=\mathbb Z/4$,
$\pi_5P^3(3)=\mathbb Z/9$. Since all homology groups $H_*(\Omega
P^3(p))$ have exponent $p$, we have
$$
\mathbb Z/p\subseteq {\rm Ker}\{h_{2p-2}:\pi_{2p-2}(\Omega
P^3(p))\to H_{2p-2}(\Omega P^3(p))\}.
$$
Taking $\mathcal G$ to be a simplicial model for $\Omega P^3(p)$
with $\mathcal G_3$ generated by degeneracies, we obtain the
following example. Set $G=\mathcal G_{2p-2},\ K_i={\rm Ker} d_i:
\mathcal G_{2p-2}\to \mathcal G_{2p-3},$ then, by proposition
\ref{simple}, the generalized dimension quotient
$$
\frac{D(G,\|{\bf k}_0,\dots, {\bf k}_{2p-2}\|)}{\|K_0,\dots,
K_{2p-2}\|}
$$
contains a subgroup $\mathbb Z/p$.

In the case $p=2$, we can choose $$\mathcal G_1=F(\sigma),\
\mathcal G_2=F(a,b,c)$$ with the face maps
$$
d_0:\begin{cases} a\mapsto \sigma^2\\ b\mapsto \sigma\\
c\mapsto 1\end{cases},\ d_1:\begin{cases} a\mapsto 1\\
b\mapsto \sigma\\ c\mapsto \sigma\end{cases},\ d_2: \begin{cases}
a\mapsto 1\\ b\mapsto 1\\ c\mapsto \sigma\end{cases}
$$
Taking the kernels $R={\rm Ker}(d_0),\ S={\rm Ker}(d_1),\ T={\rm
Ker}(d_2)$, we obtain an example discussed in introduction, with
$$
\frac{D(G, \|{\bf r},{\bf s},{\bf t}\|)}{\|R,S,T\|}\simeq \mathbb
Z/2.
$$

\noindent{\bf Conjecture.} For a prime $p$, any group $G$ and its
normal subgroups $R_1,\dots, R_n,\ n\geq 2$, the quotient
$$
\frac{D(G,\|{\bf r}_1,\dots, {\bf r}_n\|)}{\|R_1,\dots, R_n\|}
$$
is $p$-torsion free provided $n<2p-1$.

\subsection{Connectivity conditions} Let G be a group with normal subgroups $R_1,\dots, R_n,$ $n\geq 2$. Recall the connectivity
condition from \cite{EM}. The $n$-tuple of normal subgroups
$(R_1,\dots, R_n)$ is called {\it connected} if either $n\geq 2$
or $n\geq 3$ and for all subsets $I,J\subseteq \{1,\dots, n\},$
with $|I|\geq 2,$ $|J|\geq 1$, the following holds
$$
\left(\bigcap_{i\in I} R_i\right)\left(\prod_{j\in
J}R_j\right)=\bigcap_{i\in I}\left(R_i(\prod_{j\in J}R_j)\right)
$$
Now consider the homotopy colimit $X$ of classifying spaces
$B(G/\prod_{i\in I}R_i)$, where $I$ ranges over all proper subsets
$I\subset \{1,\dots, n\}$. Then, if for any $i=1,\dots, n$, the
$n-1$-tuple of normal subgroups $(R_1,\dots, \hat R_i,\dots, R_n)$
is connected, then
$$
\pi_n(X)=\frac{R_1\cap \dots \cap R_n}{\|R_1,\dots, R_n\|}.
$$
This is proved in \cite{EM}. The proof of theorem 1 from \cite{EM}
together with results of section \ref{section3}, namely, together
with the construction of the functor $\Pi$, imply the following
\begin{proposition}\label{p12}
If for any $i=1,\dots, n$, the $n-1$-tuple of normal subgroups
$(R_1,\dots, \hat R_i,\dots, R_n)$ is connected, then there is a
commutative diagram
$$
\xyma{\frac{R_1\cap \dots \cap R_n}{\|R_1,\dots,
R_n\|}\ar@{->}[r]\ar@{=}[d] & \frac{{\bf r}_1\cap \dots \cap {\bf
r}_n}{\|{\bf r}_1,\dots, {\bf r}_n\|}\ar@{->}[d]\\
\pi_{n-1}(\Omega X)\ar@{->}[r]^{h_{n-1}\Omega} & H_{n-1}(\Omega
X)}.
$$
\end{proposition}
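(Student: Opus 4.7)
My plan is to imitate the proof of Theorem~\ref{main}, replacing the $3$-cube of groups and rings by an $n$-cube indexed by subsets of $\langle n\rangle$. The first step is to model $X$ as the classifying space $B\mathcal{G}$ of a simplicial group $\mathcal{G}$, so that $\Omega X\simeq|\mathcal{G}|$, by choosing simplicial models for the pushout diagram of $B(G/\prod_{i\in I}R_i)$, $I\subsetneq\langle n\rangle$, that defines $X$. The connectivity hypothesis together with the result of \cite{EM} quoted immediately before the proposition identifies $\pi_{n-1}(\Omega X)=\pi_n(X)$ with $(R_1\cap\cdots\cap R_n)/\|R_1,\ldots,R_n\|$, which supplies the left vertical equality of the desired square.

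For the right vertical map, I would apply the functor $\Z[-]$ pointwise to the $n$-cube of simplicial groups $\{G/\prod_{i\in I}R_i\}_{I\subseteq\langle n\rangle}$. Since $\Z[-]$ preserves pushouts, this yields an $n$-cube of simplicial rings whose homotopy colimit models $\Z[\Omega X]$ and hence has $\pi_*=H_*(\Omega X)$. Extending this $n$-cube to an $n$-cube of homotopy fibration sequences using the machinery of section~\ref{section3} and applying the functor $\Pi$ of~\eqref{pi} produces a crossed $n$-cube of rings whose top corner $R_{\langle n\rangle}$ carries a natural map to $H_{n-1}(\Omega X)$, obtained from the iterated long exact sequences of the fibrations. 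The central verification is the inclusion $\|\mathbf{r}_1,\ldots,\mathbf{r}_n\|\subseteq{\rm Im}(\mu_k)$ for each $k\in\langle n\rangle$: given a summand $(\bigcap_{i\in I}\mathbf{r}_i)(\bigcap_{j\in J}\mathbf{r}_j)$ with $I\sqcup J=\langle n\rangle$ and $k\in I$, the crossed cube axiom $\mu_k h(a\otimes b)=h(\mu_k a\otimes b)$ combined with the fifth axiom realises the product $ab$ as an element of ${\rm Im}(\mu_k)$, exactly as in the $n=3$ case; the argument for $k\in J$ is symmetric. Consequently $\|\mathbf{r}_1,\ldots,\mathbf{r}_n\|$ lies in the kernel of the composite $\mathbf{r}_1\cap\cdots\cap\mathbf{r}_n\to H_{n-1}(\Omega X)$, so the right vertical arrow of the square is well-defined.

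Commutativity of the square follows from the naturality of $\Pi$ applied to the evident morphism of $n$-cubes induced by the embedding $\mathcal{G}\hookrightarrow\Z[\mathcal{G}]$, $g\mapsto g-1$, which on geometric realisations is the Hurewicz homomorphism of the loop space. The main obstacle I anticipate is the combinatorial bookkeeping required for the iterated homotopy fibrations of the $n$-cube: one must verify that the crossed cube axiom is invoked for every partition $I\sqcup J=\langle n\rangle$ rather than just the three partitions available when $n=3$, and that both sides of the comparison are correctly computed by $\Pi$ under the connectivity hypothesis. Given the technology of section~\ref{section3} and the \cite{EM} identification quoted above, this reduces to systematic diagram-chasing without any further homotopical input.
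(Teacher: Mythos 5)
Your proposal takes essentially the same route as the paper: Proposition~\ref{p12} is obtained there exactly as in the proof of Theorem~\ref{main}, by combining the identification of $\pi_n(X)$ from the proof of Theorem~1 of \cite{EM} (valid under the stated connectivity hypothesis) with the functor $\Pi$ of Section~\ref{section3} applied to the $\mathbb Z[-]$-image of the cube of simplicial groups. Your key verification $\|{\bf r}_1,\dots,{\bf r}_n\|\subseteq {\rm Im}(\mu_k)$ via the crossed-cube axioms, and the naturality of $g\mapsto g-1$ giving commutativity, are precisely the steps the paper relies on in the $n=3$ case and implicitly here.
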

One can use proposition \ref{p12} for proving the above conjecture
about the $p$-torsion in the generalized dimension quotient in
some particular cases.

\end{document}